\theoremstyle{plain}
\newtheorem{thm}{Theorem}[section]
\newtheorem{lem}[thm]{Lemma}
\newtheorem{cor}[thm]{Corollary}
\newtheorem{claim}[thm]{Claim}
\theoremstyle{definition}
\newtheorem{defn}{Definition}
\theoremstyle{remark}
\newtheorem{remark}{Remark}
\def\det{{\textrm{det}}}
\def\ker{{Ker}}
\def\N{{\mathbb N}}
\def\P{{\cal P}}
\def\R{{\mathbb R}}
\def\Sub{\textrm{Sub}}
\def\supp{{\textrm{supp}}}
\DeclareMathOperator{\tr}{\mathrm{tr}}
\def\chix{{\raise.5ex\hbox{$\chi$}}}
\def\Z{{\mathbb Z}}
\def\SL{\mathrm{SL}}
\def\GL{\mathrm{GL}}
\def\slz{\mathrm{SL}_d(\Z)}
\def\slr{\mathrm{SL}_d(\R)}
\def\subg{\mathrm{Sub}_G}
\def\dd{\mathrm{d}}
\def\BH{\mathbb{H}}
\renewcommand{\P}[1]{{\mathbb{P}}\left[{#1}\right]}
\newcommand\Aut{\operatorname{Aut}}
\newcommand\pr{\operatorname{pr}}
\begin{document}
\title{Invariant random subgroups of semidirect products}

\author{Ian Biringer\footnote{Boston College. Supported in part by NSF
    grant DMS-1611851 and CAREER Award DMS-1654114.}, Lewis
  Bowen\footnote{University of Texas at Austin. Supported in part by
    NSF grant DMS-0968762, NSF CAREER Award DMS-0954606 and BSF grant
    2008274.} and Omer Tamuz\footnote{California Institute of
    Technology. This work was supported by a grant from the Simons
    Foundation (\#419427, Omer Tamuz).}}
  
\maketitle

\begin{abstract}
We study invariant random subgroups (IRSs) of semidirect products $G = A \rtimes \Gamma$. In particular, we characterize all IRSs of parabolic subgroups of $\slr$, and show that all ergodic IRSs of $\R^d \rtimes \slr$ are either of the form $\R^d \rtimes K$ for some IRS of $\slr$, or are induced from IRSs of $\Lambda \rtimes \mathrm{SL}(\Lambda)$, where $\Lambda < \R^d$  is a lattice.
\end{abstract}

\tableofcontents

\section{Introduction}

Let $G $ be a locally compact, second countable group and let $\subg$
be the space of closed subgroups of $G $, considered with the Chabauty
topology~\cite{Chabautylimite}.

\begin {defn}An \emph {invariant random subgroup} (IRS) of $G$ is a random element of $\subg$ whose law is a conjugation invariant Borel probability measure. 	
\end {defn}

IRSs were introduced by Ab\'ert--Glasner--Vir\'ag in
\cite{Abertkesten}, and independently by Vershik~\cite{Vershiktotally} (under a different name) and the second author \cite{bowen2010random}. Examples of IRSs include
normal subgroups, as well as random conjugates $g\Gamma g^{-1}$ of a
lattice $\Gamma < G$, where the conjugate is chosen by selecting
$\Gamma g$ randomly against the given finite measure on
$\Gamma \backslash G$. More generally, any IRS of a lattice $\Lambda < G$ \emph {induces} an IRS of $G$: if $\mu_\Gamma$ is the law of the original IRS and $\eta$  is a $G $-invariant probability measure on $ \Gamma \backslash G$, the new law $\mu_G$  is given by the integral $$\mu_G = \int_{\Gamma g  \in \Gamma \backslash G} g_* \mu_\Gamma \ d\eta,$$
where $\mu_\Gamma$ is regarded as a  measure on $\mathrm{Sub}_\Gamma \subset \subg$, and $g$ acts on $\subg$  by conjugation.  Informally, we conjugate the IRS of $\Gamma$ by an `$\eta$-random' element of $G$. 
 Since $\subg$ is compact \cite[Lemma
E.1.1]{Benedettilectures}, the space of (conjugation invariant) Borel
probability measures on $\subg$ is weak* compact, by Riesz's 
representation theorem and Alaoglu's theorem. Hence, IRSs compactify
the set of lattices in $G$. There is a growing literature on IRSs
(see, e.g.,~\cite{bader2014amenable,biringer2017unimodularity,bowen2012invariant,bowen2015invariant, thomas2014invariant}) and their
 applications, see especially \cite{
  abert2012growth, bowen2010random, hartman2015furstenberg, tucker2015weak}.

Our goal in this note is to develop an understanding of IRSs of semidirect
products $G=A \rtimes \Gamma$.  There are few
general constructions of such IRSs: there is the trivial IRS
$\{e\}$, and IRSs of the form $A \rtimes K$, where $K$ is an IRS of
$\Gamma$. When the kernel $\Gamma_{triv}$ of the action
$\Gamma \circlearrowright A$ is nontrivial, one can also construct
IRSs of the form $H \rtimes K$, where $H$ is an IRS of $A$ and $K$ is
an IRS of $\Gamma$ that lies in $\Gamma_{triv}$, but additional
examples are hard to find.

The kernel of our work are Theorems \ref{thm:general-irs1} and \ref{thm:general-irs2}, in which we study `transverse' IRSs of $G=A \rtimes \Gamma$ when $A$ is torsion-free abelian or simply connected nilpotent.  Here, an IRS $H<G$ is \emph{transverse} if $H \cap A = \{0\}$. This theorem has two parts: when $A$ is torsion-free abelian, we prove that that the projection of $ H$ to $\Gamma$ acts trivially on $A$  almost surely, and if $A$ is a simply connected nilpotent Lie group, we show that an (often large) subgroup of $\Gamma$ acts precompactly on the Zariski closure of the set of all first coordinates of elements $(v,M)\in H$, as $H$ ranges through the support of the IRS.

As applications of Theorems \ref{thm:general-irs1} and \ref{thm:general-irs2}, we study IRSs of two  familiar semidirect products: the \emph{special affine groups} $\R^d \rtimes \slr$ and the \emph {parabolic subgroups} of $\slr$.

\subsection{IRSs of special affine groups}
We are particularly interested in IRSs of $\R^d \rtimes \slr$.  In addition to the
examples $\{e\}$ and $\R^d \rtimes K$ mentioned above, one can construct an IRS from a lattice $\Lambda \subset \R^d$. Namely, the subgroup $\SL(\Lambda) < \slr$ stabilizing $\Lambda$ is also a lattice, see \cite{Raghunathandiscrete}, so the semidirect product $\Lambda \rtimes \SL(\Lambda) $ is a lattice in $\R^d \rtimes \slr$, and hence a random conjugate of it is an IRS.

\begin{thm}
  \label{thm:slr-irs}
  Let $H$ be a non-trivial ergodic IRS of $\R^d \rtimes \slr$. Then
  either 
\begin {enumerate}
\item $H = \R^d \rtimes K$  for some IRS $K < \slr$, or 
\item $H$  is induced from an IRS of $\Lambda \rtimes \SL(\Lambda)$, for some lattice $\Lambda <\R^d$.
\end {enumerate}
\end{thm}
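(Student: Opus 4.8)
The plan is to analyze the random closed subgroup $H \cap \R^d$, which, since $\R^d$ is normal, is an equivariant function of $H$, and to split into cases according to its structure. Write $\pi : \R^d \rtimes \slr \to \slr$ for the projection. Because $\R^d$ is abelian, conjugation by $\R^d$ fixes every subgroup of $\R^d$, so $H \mapsto H \cap \R^d$ intertwines the conjugation action of $\R^d \rtimes \slr$ with the linear action of $\slr$ on closed subgroups of $\R^d$. Every closed subgroup of $\R^d$ splits as $V \oplus \Lambda$ with $V$ a linear subspace and $\Lambda$ discrete; the subspace $V$ (the identity component) and the rank of $\Lambda$ are measurable and $\slr$-equivariant, hence almost surely constant by ergodicity. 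Moreover $V$ and the linear span of $\Lambda$ are $\slr$-equivariant random subspaces, so their laws are $\slr$-invariant probability measures on a Grassmannian $\mathrm{Gr}(k,d)$. Since $\slr$ admits no invariant probability measure on $\mathrm{Gr}(k,d)$ for $0<k<d$ (a contracting one-parameter subgroup would force the measure to be finitely supported, contradicting transitivity of the action), each of these subspaces is almost surely either $0$ or all of $\R^d$. This leaves exactly three possibilities: $\R^d \subseteq H$; $H \cap \R^d = \{0\}$; or $H \cap \R^d = \Lambda$ is a full-rank lattice.

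If $\R^d \subseteq H$ then, by normality, $H = \pi^{-1}(K) = \R^d \rtimes K$ with $K = \pi(H)$ an IRS of $\slr$, which is case (1). If $H$ is transverse, i.e. $H \cap \R^d = \{0\}$, then Theorem~\ref{thm:general-irs1} applies: the projection $\pi(H)$ acts trivially on $\R^d$ almost surely. But the standard representation $\slr \curvearrowright \R^d$ is faithful, so $\pi(H) = \{I\}$ and hence $H \subseteq \ker \pi = \R^d$; transversality then forces $H = \{e\}$, contradicting non-triviality. Thus a non-trivial ergodic IRS not of the first type satisfies $H \cap \R^d = \Lambda$, a full-rank lattice, almost surely, whose covolume is $\slr$-invariant and hence almost surely a constant $c$.

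It remains to show that such an $H$ is induced from an IRS of $\Lambda' \rtimes \SL(\Lambda')$ for a suitable lattice $\Lambda'$ (which, I note, will generally be strictly finer than $H\cap\R^d$). First, $H$ normalizes $H \cap \R^d = \Lambda$, which forces $K := \pi(H) \subseteq \SL(\Lambda)$ and $H \subseteq \R^d \rtimes \SL(\Lambda)$; in particular $H$ is discrete. Next I consider the subgroup $L = L(H) \subseteq \R^d$ generated by all first coordinates of elements of $H$. A short computation (using $(\lambda, I)(v,M)(\lambda,I)^{-1} = (v + (I-M)\lambda, M)$ for $\lambda\in\Lambda$) shows that $L$ is $K$-invariant, contains $\Lambda$, and that its linear part $V_0$ — the identity component of $\overline{L}$ — contains $\mathrm{Im}(I - M)$ for every $M \in K$. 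Consequently $V_0$ is unchanged by conjugating $H$ by elements of $\R^d$ (which shift the first coordinates by vectors in $\sum_M \mathrm{Im}(I-M) \subseteq V_0$) and is sent to $N V_0$ by conjugation by $N \in \slr$. Thus $V_0$ is a genuinely $G$-equivariant random subspace, and the Grassmannian argument above gives $\dim V_0 \in \{0, d\}$ almost surely.

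The heart of the matter, and the step I expect to be the main obstacle, is to rule out $\dim V_0 = d$ — equivalently, to show the first coordinates of $H$ cannot be dense in $\R^d$. This is precisely the phenomenon controlled by the second part of our general result: one wants to say that, applying the mechanism behind Theorem~\ref{thm:general-irs2} to the Zariski closure $Z$ of the set of first coordinates, a large subgroup of $K$ acts precompactly on $Z$; since $K \subseteq \SL(\Lambda)$ is \emph{discrete}, a precompact subgroup is finite, and a finite group $F$ has only finitely many (torsion) classes in $H^1(F, \R^d/\Lambda) \cong H^2(F,\Lambda)$ — forcing the first coordinates to lie, up to a coboundary, in a lattice, hence $\dim V_0 = 0$. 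The genuine difficulty is that Theorem~\ref{thm:general-irs2} is stated for transverse IRSs while ours is not, so this paragraph requires either adapting that argument to the present non-transverse setting or proving directly that the translational cocycle is cohomologically rational. Granting $\dim V_0 = 0$, the group $L = \Lambda'$ is a lattice containing $\Lambda$ with finite index and constant covolume $c'$; $K$-invariance of $L$ yields $K \subseteq \SL(\Lambda')$, whence $H \subseteq \Lambda' \rtimes \SL(\Lambda')$ almost surely. The assignment $H \mapsto \Lambda'$ is then $G$-equivariant onto the space of covolume-$c'$ lattices, a transitive $\slr$-space $\slr/\SL(\Lambda_0')$ carrying a unique invariant probability measure; disintegrating the law of $H$ over this map exhibits $H$ as a random conjugate of subgroups of the fixed lattice $\Lambda_0' \rtimes \SL(\Lambda_0')$, with the conjugating element equidistributed (the $\R^d$-direction equidistributing by invariance under $\R^d$-conjugation). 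This is exactly the assertion that $H$ is induced from an IRS of $\Lambda_0' \rtimes \SL(\Lambda_0')$, giving case (2).
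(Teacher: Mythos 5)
Your skeleton is the same as the paper's: reduce to a trichotomy for $H\cap\R^d$ via the Furstenberg/Grassmannian argument, dispose of the transverse case with Theorem~\ref{thm:general-irs1} and faithfulness, read off case (1) when $\R^d\subseteq H$, and in the lattice case first deduce $\pr H\subseteq \SL(\Lambda)$ from normality of $H\cap\R^d$ in $H$. All of that is correct and matches the paper essentially line for line (the paper phrases the Grassmannian step via the Zariski closure of $H\cap\R^d$ and the $k$-th exterior power, but the content is identical). Your parenthetical that the target lattice may have to be strictly finer than $H\cap\R^d$ is a genuinely good observation: the subgroup $\pi_n^{-1}(H)$ from the introduction, viewed as an IRS of $\R^2\rtimes\mathrm{SL}_2(\R)$, has $H\cap\R^2=n\Z^2$ but contains elements whose translation parts lie only in $\Z^2$, so any correct argument must produce a possibly finer $\slr$-equivariant lattice.

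The problem is that the one step carrying the real weight of case (2) --- showing that the translation parts of elements of $H$ generate a \emph{discrete} subgroup $\Lambda'$ of $\R^d$ rather than a dense one --- is exactly the step you leave open, and the two devices you gesture at do not close it. First, the claim that $V_0\supseteq\mathrm{Im}(I-M)$ for $M\in K$ is false as derived: conjugating by $(\lambda,I)$ with $\lambda\in\Lambda$ only shows $L\supseteq (I-M)\Lambda$, and since $M\in\SL(\Lambda)$ this is a subgroup of $\Lambda$, hence discrete and invisible to the identity component $V_0$. (The conclusion $\dim V_0\in\{0,d\}$ survives anyway, because $\slr$-equivariance of $H\mapsto V_0$ alone suffices for the Grassmannian argument; but nothing in your write-up then excludes $\dim V_0=d$.) Second, Theorem~\ref{thm:general-irs2} is not applicable and would not help even if it were: it requires transversality, which fails here since $H\cap\R^d=\Lambda\neq\{0\}$, and its conclusion concerns only the centralizer $Z(\pr\mathcal H)$ in $\Gamma$, which for a Zariski-dense $K\subseteq\SL(\Lambda)$ is just $\{\pm I\}$, so precompactness of its action says nothing. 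The subsequent cohomological sketch presupposes that the relevant acting group is finite, which $K$ is not, so $H^1(K,\R^d/\Lambda)$ need not be torsion and the "cohomologically rational" conclusion does not follow formally. The paper avoids this route entirely: it disintegrates $\mu=\int\mu_\Lambda\,d\nu(\Lambda)$ over $H\mapsto H\cap\R^d$, identifies $\nu$ with Haar measure on the covolume-$c$ lattices by transitivity, and then uses invariance of the conditional measure $\mu_\Lambda$ under conjugation by the stabilizer of $\Lambda$ together with the cocycle $S_H$ to pin down the translation parts, concluding that $\mu_\Lambda$ is the law of an IRS of a group of the form $\Lambda\rtimes\SL(\Lambda)$. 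Until you supply an argument of that kind (or otherwise prove discreteness of $\Lambda'$), case (2) is not established.
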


Here, an IRS is \emph{ergodic} if its law is an ergodic measure for
the conjugation action of $G$ on $  \subg$.  By Choquet's theorem \cite{Phelpslectures}, every IRS can
be written as an integral of ergodic IRSs. Note that by transitivity of the action of $\slr$ on the space of lattices of a fixed covolume, we can actually choose $\Lambda$ in 2.\ to be a scalar multiple of $\Z^d$.

As a corollary, any normal subgroup of $\R^d \rtimes \slr$  is of the form $\R^d \rtimes K$ where $K$ is a normal subgroup of $\slr$. (Here, $K=\{e\},\, \slr$ or $\{\pm I\}$, where the last option is only available when $d$ is even.)   Similarly,  it follows that every lattice of $\R^d  \rtimes \slr$ is a finite index subgroup of some $\Lambda \rtimes \SL(\Lambda)$.  We expect that these results are not  entirely surprising, although we note that Theorem 4.8 of \cite{Diattalattices} is that $\R^d \rtimes \slr$  has no uniform lattices, which follows trivially from  this classification.

Stuck--Zimmer~\cite{stuck1994stabilizers} show that for $d > 2$, every
ergodic IRS of $\slr$ is either a lattice or a normal subgroup. This
result, together with Theorem~\ref{thm:slr-irs}, implies that for
$d > 2$ every ergodic IRS of $\R^d \rtimes \slr$ is likewise either a
lattice or a normal subgroup.

In light of  Theorem \ref{thm:slr-irs}, to understand IRSs in special affine groups it suffices to study those of $G = \Z^d \rtimes \slz$. 
There are the usual examples $\{e\}$ and $\Z^d \rtimes K$, where $K$ is an IRS of $\slz$, but in general, some subtle finite group theory appears.  For instance, let $$\pi_n : G \longrightarrow (\Z/n\Z)^d \rtimes \mathrm{SL}_d(\Z/n\Z)$$ be  the reduction map and setting $d=2$, consider the subgroup 
$$H = \left \{ \big ( (t,0),\left (\begin {smallmatrix} 1 & 1 \\ 0 & 1 \end{smallmatrix} \right )^t \big )  \ \Big |\  t \in \Z/n\Z \right \} <  (\Z/n\Z)^d \rtimes \mathrm{SL}_d(\Z/n\Z).$$
The preimage $\pi_n^{-1}(H)$ is a finite index subgroup of $G$,  and therefore can be considered as an IRS, but it does not have the form $\Lambda \rtimes K$  for any $\Lambda < \Z^d,K < \slz$.  However, we will show that all IRSs of $G$  are semidirect products up to some `finite index noise'.  Namely, let $$G_n = \ker \, \pi_n = n\Z^d \rtimes \Gamma(n),$$ where $\Gamma(n)$  is the   kernel of  the reduction map $\slz \to \mathrm{SL}_d(\Z/n\Z)$. We prove:


\begin{thm}
  \label{thm:affine-irss}
  Let $H$ be a non-trivial ergodic IRS of $\Z^d \rtimes \slz$. Then
  there is some $n \in \N$ such that $H_n = H \cap G_n$ is of the form
  $n\Z^d \rtimes K$, where $K$ is an IRS of $\slz$.
\end{thm}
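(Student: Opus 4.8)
The plan is to reduce the entire statement to a single claim: that for an appropriate $n$ the abelian part $n\Z^d$ is contained in $H$ almost surely. Granting this, everything else is essentially formal. Let $p \colon G \to \slz$ denote the projection. Since $G_n = n\Z^d \rtimes \Gamma(n)$, every element of $H_n = H \cap G_n$ has first coordinate in $n\Z^d$, so $K := p(H_n) \subseteq \Gamma(n)$. If $(v_0, M) \in H_n$, then because $n\Z^d \subseteq H \cap G_n$ and $M(n\Z^d) = n\Z^d$, the products $(v_0, M)(w, I) = (v_0 + Mw, M)$ range over all of $\{(v,M) : v \in n\Z^d\}$ as $w$ ranges over $n\Z^d$; hence $H_n = n\Z^d \rtimes K$. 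Finally $G_n$ is normal in $G$, so $H_n$ is again a $G$-conjugation invariant random subgroup, and since conjugation by $(v,M)$ projects under $p$ to conjugation by $M$, with $M$ exhausting $\slz$, the law of $K = p(H_n)$ is $\slz$-conjugation invariant, i.e.\ $K$ is an IRS of $\slz$.

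It remains to produce $n$ with $n\Z^d \subseteq H$ almost surely. I would first analyze $L := H \cap \Z^d$. Since $\Z^d \triangleleft G$ and conjugation by $(v,M)$ acts on $\Z^d$ by $w \mapsto Mw$, the law of $L$ is invariant under $L \mapsto ML$ for every $M \in \slz$; thus $L$ is an $\slz$-invariant random subgroup of $\Z^d$, and its rational span $V = L \otimes \mathbb{Q}$ is an $\slz$-invariant random element of the (countable) rational Grassmannian of $\mathbb{Q}^d$. By ergodicity $\dim V$ is almost surely a constant $k$. Any conjugation invariant probability measure on the rational Grassmannian must be supported on finite $\slz$-orbits, i.e.\ on subspaces whose stabilizer has finite index in $\slz$; but finite-index subgroups of $\slz$ are Zariski dense in $\slr$, which acts irreducibly on $\R^d$, so no proper rational subspace can have a finite orbit. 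Hence $k \in \{0, d\}$.

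The case $k = 0$ means $H$ is transverse, i.e.\ $H \cap \Z^d = \{0\}$. Since $\Z^d$ is torsion-free abelian, Theorem~\ref{thm:general-irs1} applies and forces $p(H)$ to act trivially on $\Z^d$; but $\slz$ acts faithfully (already $-I$ negates nonzero vectors), so $p(H) = \{I\}$, and transversality then gives $H = \{e\}$, contradicting nontriviality. Therefore $k = d$, so $L$ is almost surely a finite-index subgroup of $\Z^d$. The exponent $e$ of the finite group $\Z^d / L$ is a $G$-invariant function of $H$, hence almost surely equal to a constant $n$ by ergodicity, and then $n\Z^d \subseteq L \subseteq H$, which is exactly what was needed. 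The main obstacle is this dichotomy step together with the transverse case: ruling out $0 < k < d$ is elementary once one checks that no proper rational subspace has a finite $\slz$-orbit, while eliminating $k = 0$ is precisely where the nontrivial input of Theorem~\ref{thm:general-irs1} is required; the passage from $n\Z^d \subseteq H$ to the semidirect decomposition and the IRS property of $K$ is then purely formal.
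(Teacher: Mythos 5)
Your proposal is correct and takes essentially the same route as the paper: Theorem~\ref{thm:general-irs1} together with faithfulness of $\slz \circlearrowright \Z^d$ rules out the transverse case, countability plus invariance (your rational Grassmannian step is exactly the implicit justification for the paper's ``finite orbit implies finite index'') forces $H \cap \Z^d$ to be finite index, and the splitting of $H_n$ is the same formal computation the paper phrases via the cocycle $S_H$. The only cosmetic difference is that you extract $n$ as the almost surely constant exponent of $\Z^d/(H\cap\Z^d)$, whereas the paper takes the intersection of the finite $\slz$-orbit of $H\cap\Z^d$; both yield $n\Z^d \subseteq H$.
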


\begin{remark}\label {rmk}
 In the case of $G=\SL_n(\Z)$, the Nevo--Stuck--Zimmer theorem says that any ergodic IRS of $G$ is either finite index  almost surely, or is central in $G$, see \cite{stuck1994stabilizers,nevo2002generalization}. Bekka \cite{bekka2007operator} later generalized this to a rigidity statement about the \emph{characters} of $G$. Here, an IRS with law $\mu$ gives the character
$\phi : G \longrightarrow [0,1],$ where  $\phi(g) = \mu( \{H \in \Sub_G  \ | \ g \in H \} ).$

Specializing Bekka's proof to the case of IRSs, Theorem \ref{thm:affine-irss} can be used in place of his Sections 4 and 5 (and a bit of 6) in a fairly elementary proof of the Nevo--Stuck--Zimmer theorem for $G=\SL_n(\Z)$. Namely, suppose $ H \leq G$ is an ergodic infinite index IRS. Writing
$$\Z^n = \langle x_1\rangle \oplus \cdots \oplus \langle x_n \rangle ,$$
we can let $P_i \cong \Z^{n-1} \rtimes \SL_{n-1}(\Z) $ be the parabolic subgroup of $G$ that is the stabilizer of $x_i$, and let $V_i \cong \Z^{n-1}$ be the corresponding unipotent subgroup of $P_i$. Theorem \ref{thm:affine-irss} says that for each $i$, either  $H \cap P_i$ is almost surely trivial or $H$ almost surely contains a lattice in $V_i$. If for every $i$, we have that $H \cap V_i $ is a lattice in $V_i$ a.s., then there is some $m$ such that a.s.\ $H$ contains the $m^{th}$ powers of all  elementary matrices, which implies $H$ is finite index, e.g.\ by Tits \cite{tits1976systemes}. So, we can assume that for some $i$, $H \cap P_i $ is trivial. Similarly, we can assume that $H \cap P_j^t$ is trivial for some $j$, where $P_j^t$ is the parabolic subgroup one gets by taking the transposes of all the matrices in $P_j$. Moreover, we can assume $i \neq j$, since if $P_i$ and $P_i^t$ were the only parabolics intersecting $H$ trivially, one would still get all possible $m^{th}$ powers of elementary matrices in $H $ as above. Switching indices  so that $(i,j)=(n,1)$ puts us at the beginning of Bekka's Section 6---and in fact, we already know Lemma 15. 

This gives a proof of the Nevo--Stuck--Zimmer theorem for $\SL_n(\Z)$  in which the only ingredients are our Theorem \ref{thm:affine-irss} (which is actually easier to prove than much of the content of this paper), the fact that the $m^{th}$ powers of all the elementary matrices generate a finite index subgroup of $\SL_n(\Z)$, and the  elementary arguments in \cite[\S 6]{bekka2007operator}. \end{remark}

\subsection{IRSs of parabolic subgroups of $\slr$}

 Suppose that $W=\R^d$ is a finite dimensional real vector space, written as a direct sum 
$$W=S_1 \oplus \cdots \oplus S_n$$
 of subspaces, and that $\mathcal{F}$  is the associated flag $$0=W_0 < W_1 < \cdots < W_n = W, \ \ \ W_k = \oplus_{i=1}^k S_i.$$  

Let $P < \SL(W)$ be the  corresponding \emph{parabolic subgroup}, i.e.\ the stabilizer of the flag $\mathcal F$, and let $V < P$    be the associated \emph {unipotent subgroup},  consisting of all $A \in P$  that act trivially on each of the factors $W_i/W_{i-1}$. We then have
$$P = V \rtimes R, \ \ R = \left \{ (A_1,\ldots,A_n) \in \prod_{i=1}^n \GL(S_i) \ \Big | \ \prod_i \det A_i = 1\right \}.$$
Elements of $P$ can be considered as upper triangular $n \times n$-matrices, where the $ij^{th}$ entry is an element of $\mathcal L(S_j,S_i)$,  the vector space of linear maps $S_j \longrightarrow S_i$. Elements of $R$ are diagonal matrices, and elements of $V$ are upper unitriangular.
 
Take a subset $\mathcal E \subset \{1, \ldots, n\}^2$  consisting of
pairs $(i,j)$ with $i<j$ and such that if $(i,j) \in \mathcal E$, then
$(i',j),(i,j') \in \mathcal E$ for $i'<i$ and $j'>j$. So, imagining
elements of $\mathcal E$ as corresponding to matrix entries, we are
considering subsets of entries above the diagonal, that are closed
under `going up' and `going to the right'.  Let $V_{\mathcal E}< P$  be
the normal subgroup consisting of all matrices that are equal to the
identity matrix except at entries corresponding to elements of
$\mathcal E$, and let $\mathcal K_{\mathcal E} < R$ be the kernel  of
the $R $-action (by conjugation) on $V/V_{\mathcal E}$.

\begin {thm}[IRSs of parabolic subgroups]\label {parabolictheorem}
The ergodic IRSs of $P$ are exactly the random subgroups of the form $V_{\mathcal E}\rtimes K$, where $K$ is an ergodic IRS of $\mathcal K_{\mathcal E}$.
\end {thm}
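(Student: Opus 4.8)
The plan is to prove the two directions separately, and I would begin with the easier forward direction: that each $V_{\mathcal{E}} \rtimes K$ is an ergodic IRS of $P$. First I would check that $V_{\mathcal{E}} \rtimes K$ is a genuine closed subgroup, which holds because $V_{\mathcal{E}}$ is normal in $P$ and $\mathcal{K}_{\mathcal{E}}$ normalizes it. The crux is a direct conjugation computation: writing $g = (w,s) \in V \rtimes R = P$, one finds
\[
  g\,(V_{\mathcal{E}} \rtimes K)\,g^{-1} = V_{\mathcal{E}} \rtimes sKs^{-1}.
\]
The only subtle point is that conjugating $(v,k)$ with $v \in V_{\mathcal{E}}$, $k\in K$ produces the first coordinate $w\,s(v)\,(sks^{-1})(w^{-1})$, which lies in $V_{\mathcal{E}}$ precisely because $s(v) \in V_{\mathcal{E}}$ and because $sks^{-1} \in \mathcal{K}_{\mathcal{E}}$ acts trivially on $V/V_{\mathcal{E}}$, forcing $w\,(sks^{-1})(w^{-1}) \equiv e \bmod V_{\mathcal{E}}$. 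Invariance of the law then reduces to invariance of the law of $K$ under conjugation by all of $R$, which I would note is exactly the content of $K$ being an IRS of the normal subgroup $\mathcal{K}_{\mathcal{E}} \triangleleft R$ sitting inside $P$; ergodicity of $V_{\mathcal{E}} \rtimes K$ follows from that of $K$, since the $P$-conjugation action descends to the $R$-conjugation action on the $\mathcal{K}_{\mathcal{E}}$-coordinate.

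For the converse, let $H$ be a nontrivial ergodic IRS of $P$. The first step is to understand $N := H \cap V$, a $P$-invariant random subgroup of the simply connected nilpotent group $V$, and to show that $N = V_{\mathcal{E}}$ almost surely for a fixed $\mathcal{E}$. Once $N$ is known to be a.s.\ a \emph{fixed} subgroup, conjugation-invariance of the law forces it to be normal in $P$, and the normal-in-$P$ subgroups of $V$ are classified via the $R$-module structure of $\mathrm{Lie}(V) = \bigoplus_{i<j} \mathcal{L}(S_j,S_i)$: each $\mathcal{L}(S_j,S_i)$ is $R$-irreducible, so an invariant subalgebra is a sum of these weight spaces, and the bracket relations $[\mathcal{L}(S_j,S_i),\mathcal{L}(S_k,S_j)] = \mathcal{L}(S_k,S_i)$ are exactly what force the corresponding index set to be closed under going up and to the right, i.e.\ to be some $\mathcal{E}$. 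Establishing that $N$ is almost surely constant is a genuinely delicate point — an $R$-invariant IRS of $V$ need not a priori be a point mass — and I expect to obtain it from ergodicity together with the transverse-IRS machinery of Theorems \ref{thm:general-irs1} and \ref{thm:general-irs2} applied layer by layer to the unipotent part.

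With $\mathcal{E}$ fixed so that $V_{\mathcal{E}} \subseteq H$ and $H \cap V = V_{\mathcal{E}}$ a.s., I would pass to the quotient $\bar P = (V/V_{\mathcal{E}}) \rtimes R$ and let $\bar H$ be the image of $H$. By construction $\bar H \cap (V/V_{\mathcal{E}}) = \{e\}$, so $\bar H$ is \emph{transverse} and Theorem \ref{thm:general-irs2} applies: a large subgroup of $R$ acts precompactly on the Zariski closure $Z$ of the set of first coordinates of the elements of $\bar H$. Here lies the main obstacle, namely converting precompactness into triviality. Since $\mathcal{E}$ is closed under going up and right, every weight space surviving in $V/V_{\mathcal{E}}$ carries a nonzero weight for the diagonal torus of $R$, and I would verify that the subgroup furnished by the theorem contains one-parameter diagonal flows scaling each such space nontrivially, so that its only precompact orbit is $\{0\}$; hence $Z = \{e\}$ and $\bar H < R$. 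A final conjugation argument — conjugating $\bar H$ by elements $\bar v \in V/V_{\mathcal{E}}$, which by conjugation-invariance keeps all first coordinates inside $Z = \{e\}$ — shows that every element of $\bar H$ acts trivially on $V/V_{\mathcal{E}}$, i.e.\ $\bar H < \mathcal{K}_{\mathcal{E}}$. Because $V_{\mathcal{E}} \subseteq H$, the elements of $H$ over a given $M \in \bar H$ include $(e,M)$, so the copy $K$ of $\bar H$ inside $R < P$ satisfies $H = V_{\mathcal{E}} \rtimes K$ with $K < \mathcal{K}_{\mathcal{E}}$; ergodicity of $K$ as an IRS of $\mathcal{K}_{\mathcal{E}}$ then follows from that of $H$ under the established correspondence.
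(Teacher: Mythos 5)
Your overall skeleton matches the paper's, and the second half of your converse is essentially the argument the paper gives: after one knows $H\cap V=V_{\mathcal E}$ a.s., the paper also passes to the transverse IRS in $(V/V_{\mathcal E})\rtimes R$, invokes Theorem \ref{thm:general-irs2}, and kills the Zariski closure of the first coordinates by exhibiting a block-scalar element of $R$ (which lies in the relevant centralizer no matter what $\pr H$ is) that scales a surviving entry by $4$, contradicting precompactness. (The paper reaches $\pr H\subset\mathcal K_{\mathcal E}$ slightly differently, by first applying Theorem \ref{thm:general-irs1} to the abelianization $(V/V_{\mathcal E})^{ab}\rtimes R$; your route of extracting it from $\mathcal X=\{e\}$ by conjugating by elements of $V/V_{\mathcal E}$ is a legitimate minor reordering.) In the forward direction, note that invariance of the law of $K$ under all of $R$ is \emph{not} ``exactly the content'' of $K$ being an IRS of $\mathcal K_{\mathcal E}$: by definition you only get invariance under $\mathcal K_{\mathcal E}$-conjugation. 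You need the additional observation (made in the paper) that $R$ is generated by $\mathcal K_{\mathcal E}$ and its centralizer, so every $R$-conjugation restricted to $\mathcal K_{\mathcal E}$ coincides with a $\mathcal K_{\mathcal E}$-conjugation. This is fixable but must be said.

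The genuine gap is the step you yourself flag as delicate: proving that $H\cap V$ is almost surely a fixed $V_{\mathcal E}$. This is the main technical content of the paper's proof (Lemma \ref{intersectionLemma}), and your proposed route --- ``the transverse-IRS machinery of Theorems \ref{thm:general-irs1} and \ref{thm:general-irs2} applied layer by layer'' --- cannot work, because those theorems concern IRSs $H$ with $H\cap A=\{0\}$, whereas $H\cap V$ is an IRS \emph{contained in} the normal factor; at every layer of the lower central series the induced random subgroup sits inside the abelian part rather than transverse to it, so neither theorem applies. The paper instead uses Furstenberg-type arguments: it passes to the identity component $H_0$ and Zariski closure $\overline H$ of $H\cap V$, uses Lemmas \ref{3rdfurst} and \ref{4thfurst} (invariant measures on Grassmannians under diagonalizable and semisimple actions) to show their Lie algebras are a.s.\ direct sums of the full weight spaces $\mathcal L(S_j,S_i)$, and then rules out $H_0\subsetneq H\cap V\subsetneq\overline H$ (e.g.\ $H\cap V$ a random lattice in some $V_{\mathcal E}$) via a Haar-measure argument (Lemma \ref{secondfurst}): the $R$-action on $\overline H/H_0$ cannot preserve Haar measure. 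Your sketch also silently assumes $H\cap V$ is connected --- you classify only invariant \emph{subalgebras} --- so the disconnected possibility is not addressed at all. Without an argument of this kind the converse direction does not get off the ground.
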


The subgroups $V_{\mathcal E}$ above are exactly the normal subgroups of $P$ that lie in $V$. So, a special case of the theorem is that an ergodic IRS of $P$  that is contained in $V$ is a normal subgroup of $P$. In fact, when proving Theorem \ref{parabolictheorem}, one first proves this special case, and then applies it to $H\cap V$ when $H$ is a general ergodic IRS of $P$.   Once one knows $H\cap V= V_{\mathcal E}$, the statement of Theorem \ref{parabolictheorem} is not a surprise, since  the only obvious way to construct an IRS $H$ with $H\cap V= V_{\mathcal E}$ is to take a semidirect product with an IRS of $\mathcal K_{\mathcal E}$.

The group $\mathcal K_{\mathcal E}$ can be described explicitly via matrices. Let $\mathcal I$ be the set of all $i\in \{1,\ldots,n\}$ such that if $i < n$, then $(i,i+1)\in \mathcal E$, and if $i >1 $, then $(i-1,i)\in\mathcal E$.  
Then $(A_1,\ldots,A_n)$ acts trivially on $V/V_{\mathcal E}$  exactly when for each maximal interval $\{i,\ldots,j\}\subset \{1,\ldots,n\} \setminus \mathcal I$, there is some $\lambda \in \R\smallsetminus \{0\} $  such that $A_{i}=\cdots=A_{j}=\lambda I.$ In a picture, if $\mathcal E$  consists of the starred entries below, then $(A_1,\ldots,A_n) \in \mathcal K_{\mathcal E}$  can be any diagonal matrix with the diagonal entries below,  subject to the additional condition $\prod_i \det A_i = 1$.
\begin {equation}\label {bigmatrix}\begin{pmatrix}
\lambda I & 0 &  \star & \star  & \star  & \star &  \star  & \star  \\
0 & \lambda I & \star & \star  & \star  & \star &  \star  & \star  \\
0 & 0  & A_3 & \star  & \star  & \star &  \star  & \star \\
0 & 0  & 0 & \mu I & 0 & 0   & \star &  \star  \\ 
0 & 0  & 0 & 0 & \mu I & 0   & \star &  \star   \\ 
0 & 0  & 0 & 0 & 0 & \mu I   & \star &  \star  \\ 
0 & 0  & 0 & 0 & 0 & 0   & A_7  &  \star \\ 
0 & 0  & 0 & 0 & 0 & 0   & 0  &  A_8  \\
\end{pmatrix}\end{equation}

 This means that $\mathcal K_{\mathcal E}$  is isomorphic to the determinant $1$ subgroup of a direct product of general linear groups. Note that  the conjugation action of every element of $R$ on $\mathcal K_{\mathcal E}$  is equal to a conjugation by  an element of  $\mathcal K_{\mathcal E}$, since $R$  is generated by $\mathcal K_{\mathcal E}$ and its centralizer. So, every IRS of $\mathcal K_{\mathcal E}$ is an IRS of $R$. 

\vspace{5mm}

%
%
%
%

\subsection{Plan of the paper}
 The paper is organized as follows. In \S \ref{prelims}, we establish some preliminary results:   we introduce in \S \ref{cocycle} a useful co-cycle associated to an IRS in $A \rtimes \Gamma$, prove two facts about finite measure preserving linear actions in \S \ref{linearactions}, and prove the result about transverse IRSs in \S \ref{transversesec}. Section \ref{parabolicsec} concerns IRSs  of parabolic subgroups, and in \S \ref{affinesec} we prove Theorems \ref{thm:slr-irs} and \ref{thm:affine-irss}.

\subsection{Acknowledgments}

We thanks the referee for a careful reading of the paper, a number of useful comments, and the suggestion to combine our work with \cite{bekka2007operator} to give a proof of the Nevo--Stuck--Zimmer theorem for $\SL_n(\Z)$, as described in Remark \ref{rmk} above.

\section{IRSs in general semidirect products}\label{prelims}
In this section we study semidirect products $G = A \rtimes \Gamma$, where $\Gamma$ acts on $A$
by automorphisms. As above, $\pr$ is the natural projection
$G \to \Gamma$.

\subsection{The cocycle $S_H$}
\label {cocycle}

Let $H$ be a subgroup of $G$. For each $M \in \pr H$ let
\begin{align*}
  S_H(M) = \{v \in A\,:\,(v,M) \in H\}.
\end{align*}
Then $S_H(I) = H \cap A$ is a subgroup of $A$ where $I \in \Gamma$ denotes the identity element.

Let $(v,M), (w,N) \in H$. Then $(v,M) \cdot (w,N) = (v \cdot M w,M N) \in H$. It
follows that
\begin{align}
  \label{eq:S}
  S_H(M N) = S_H(M) \cdot M S_H(N),
\end{align}
where multiplication here denotes that of sets:
$B\cdot C = \{b\cdot c\,:\,b \in B, c\in C\}$.
\begin{claim}
  \label{clm:cocycle}
 If $M\in \pr H$, then $S_H(M)$ is a left coset of $S_H(I)$.
\end{claim}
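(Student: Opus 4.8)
The plan is to pin $S_H(M)$ down to a single coset of the subgroup $S_H(I)=H\cap A$ by producing one representative and then controlling the difference of any two elements. The hypothesis $M\in\pr H$ means exactly that there is some $v_0\in A$ with $(v_0,M)\in H$; in particular $S_H(M)$ is nonempty, and I fix such a representative $v_0\in S_H(M)$. The goal is then to prove the two inclusions that show $S_H(M)=S_H(I)\cdot v_0$.

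For the inclusion $S_H(I)\cdot v_0\subseteq S_H(M)$, take $u\in S_H(I)=H\cap A$. Both $(u,I)$ and $(v_0,M)$ lie in $H$, so their product does too; since $I$ acts trivially on $A$ the multiplication rule gives $(u,I)(v_0,M)=(u\cdot v_0,M)\in H$, whence $u\cdot v_0\in S_H(M)$. For the reverse inclusion, take $w\in S_H(M)$, so $(w,M)\in H$. Using $H$'s closure under inverses and products I compute $(w,M)(v_0,M)^{-1}$; since $(v_0,M)^{-1}=(M^{-1}v_0^{-1},M^{-1})$, the multiplication rule collapses this to $(w\cdot v_0^{-1},I)$, which therefore lies in $H\cap A=S_H(I)$. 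Hence $w\cdot v_0^{-1}\in S_H(I)$, i.e.\ $w\in S_H(I)\cdot v_0$. Combining the two inclusions yields $S_H(M)=S_H(I)\cdot v_0$, a coset of $S_H(I)$.

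I would also note that the first half of this is already visible from the cocycle identity \eqref{eq:S}: taking the two factors there to be $I$ and $M$ gives $S_H(M)=S_H(I)\cdot S_H(M)$, which shows directly that $S_H(M)$ is a union of cosets of $S_H(I)$; the inverse computation above is precisely what collapses this union to a single coset.

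There is no deep obstacle here — the entire content is the multiplication and inversion rules of the semidirect product together with the fact that $H$ is closed under both. The only point I would be careful about is the side on which the coset appears: the computation naturally produces $S_H(I)\cdot v_0$, and in the abelian setting of interest (e.g.\ $A=\R^d$ or $\Z^d$, written additively this is just $S_H(M)=v_0+S_H(I)$) left and right cosets coincide, so the statement that $S_H(M)$ is a coset of $S_H(I)$ holds verbatim. In the general case one should keep track of whether $M$ preserves $S_H(I)$ inside $A$, since otherwise the symmetric left-multiplication computation only identifies $S_H(M)$ as a coset of the conjugate subgroup $M\,S_H(I)$.
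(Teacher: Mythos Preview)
Your proof is correct and is essentially identical to the paper's: both arguments consist of the same two semidirect-product computations, namely $(x,I)(v,M)=(xv,M)$ to show $S_H(I)\cdot v\subset S_H(M)$ and $(w,M)(v,M)^{-1}=(wv^{-1},I)$ to show any two elements of $S_H(M)$ differ by an element of $S_H(I)$. Your additional remarks on the cocycle identity and on left versus right cosets are fine but not needed for the claim as stated.
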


Here, Claim \ref{clm:cocycle} and Equation \eqref{eq:S} say that $S_H$ is a \emph{cocycle} $S_H : \pr H \longrightarrow S_H(I) \backslash A$.

\begin{proof} Suppose $(v,M)$ and $(w,M)$ are elements of $H$. Then $$H \ni (v,M)\cdot (w,M)^{-1} = (v,M) \cdot (M^{-1}w^{-1},M^{-1}) = (v \cdot MM^{-1} w^{-1}, I)=(vw^{-1},I).$$
And if $(v,M)$ and $(x,I)$ are elements of $H$, we have $$H \ni (x,I) \cdot (v,M)  = (x\cdot Iv, M) = (xv, M). \qedhere$$
\end{proof}

We end this section with a useful observation. As we will apply it only when $A$ is abelian, we use additive  notation here. Let $(w,N)$ be an
arbitrary element of $G$, and let $(v,M) \in H$.  Then
$(v,M)^{(w,N)} = (N^{-1}v + N^{-1}(M-I)w, M^N) \in H^{(w,N)}$. (Here,
$a^b=b^{-1}ab$.) Hence
\begin{align}
  \label{eq:S-conj}
  S_{H^{(w,N)}}(M^N) = N^{-1} S_H(M) + N^{-1}(M-I)w.
\end{align}

\subsection{Group actions preserving finite measures}
\label {linearactions}

Here are four useful lemmas.

\begin {lem}\label{secondfurst}
	 Suppose that $G$ is a locally compact second countable group, and the induced action of $Z \leq \Aut(G)$ on  the space $\mathrm{Sub}_{G}$ preserves a finite measure $\mu$  that is supported on lattices. Then $Z$ preserves the Haar measure of $G$.
\end {lem}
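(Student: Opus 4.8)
The plan is to reduce the statement to the triviality of the \emph{modulus} homomorphism on $Z$. For $\alpha\in\Aut(G)$, let $\mathrm{mod}(\alpha)\in\R_{>0}$ be defined by $m(\alpha E)=\mathrm{mod}(\alpha)\,m(E)$ for Borel $E\subseteq G$, where $m$ is a Haar measure; this is a homomorphism $\Aut(G)\to\R_{>0}$, and $\alpha$ preserves $m$ exactly when $\mathrm{mod}(\alpha)=1$. Thus the conclusion is equivalent to $\mathrm{mod}(\alpha)=1$ for every $\alpha\in Z$. Since $G$ admits lattices it is unimodular, so $m$ is bi-invariant and $\mathrm{mod}$ is well defined; and we may assume $\mu\neq 0$ (otherwise the statement is false, so it is implicitly excluded) and, after scaling, that $\mu$ is a probability measure giving full mass to the Borel set $\mathcal L\subseteq\subg$ of lattices.

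The main tool is the covolume function $\mathrm{vol}:\mathcal L\to(0,\infty)$, $\mathrm{vol}(\Lambda)=m(G/\Lambda)$. It is \emph{$Z$-equivariant} in the sense that $\mathrm{vol}(\alpha\Lambda)=\mathrm{mod}(\alpha)\,\mathrm{vol}(\Lambda)$, since $\alpha$ carries a Borel fundamental domain for $\Lambda$ to one for $\alpha\Lambda$ and multiplies its Haar measure by $\mathrm{mod}(\alpha)$. Granting that $\mathrm{vol}$ is Borel (the technical heart, discussed below), I would argue by contradiction: if some $\alpha\in Z$ had $\mathrm{mod}(\alpha)\neq 1$, then, replacing $\alpha$ by $\alpha^{-1}\in Z$ if necessary, we may take $\lambda:=\mathrm{mod}(\alpha)>1$, and the Borel sets $E_k=\{\Lambda:\mathrm{vol}(\Lambda)\in[\lambda^k,\lambda^{k+1})\}$, $k\in\Z$, partition $\mathcal L$. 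Equivariance gives $\alpha(E_k)=E_{k+1}$, so $Z$-invariance of $\mu$ yields $\mu(E_k)=\mu(E_{k+1})$ for all $k$. As these are nonnegative, equal, and sum to $\mu(\mathcal L)=1$ over $k\in\Z$, they must all vanish, contradicting $\mu(\mathcal L)=1$. Hence $\mathrm{mod}|_Z\equiv 1$ and $Z$ preserves $m$.

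The step I expect to be the main obstacle is the Borel measurability of $\mathrm{vol}$ (and of $\mathcal L$) on the Chabauty space. I would establish it by an unfolding computation. For nonnegative $f\in C_c(G)$ and a lattice $\Lambda$, set $F_{f,\Lambda}(x)=\sum_{\gamma\in\Lambda}f(x\gamma)$, a right-$\Lambda$-invariant function, and read $f/F_{f,\Lambda}$ as $0$ where the denominator vanishes. Weil's integration formula gives
\[
\int_G \frac{f(x)}{F_{f,\Lambda}(x)}\,dm(x)=\bar m_\Lambda\big(\{F_{f,\Lambda}>0\}\big),
\]
where $\bar m_\Lambda$ is the $G$-invariant measure on $G/\Lambda$ induced by $m$, of total mass $\mathrm{vol}(\Lambda)$. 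A routine Chabauty argument shows $(\Lambda,x)\mapsto F_{f,\Lambda}(x)$ is jointly lower semicontinuous, hence Borel, so the left-hand side is Borel in $\Lambda$. Taking an increasing sequence $f_k\in C_c(G)$, $f_k\ge 0$, with supports exhausting $G$, the sets $\{F_{f_k,\Lambda}>0\}$ increase to all of $G/\Lambda$, so the right-hand side increases to $\mathrm{vol}(\Lambda)$; thus $\mathrm{vol}$ is a pointwise supremum of Borel functions, hence Borel. The same semicontinuity considerations show $\mathcal L$ is Borel. With this measurability in hand, the partition argument above completes the proof.
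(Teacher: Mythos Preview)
Your proof is correct and follows essentially the same approach as the paper: both use that an automorphism scales lattice covolume by its modulus, then produce infinitely many pairwise disjoint sets of equal positive $\mu$-measure, contradicting finiteness of $\mu$. The paper's version is terser---it first picks $n$ so that lattices with covolume in $[1/n,n]$ have positive $\mu$-measure, then chooses $A\in Z$ with modulus exceeding $n^2$ so that the iterates $A^i\mathcal S$ are disjoint---and simply leaves implicit the Borel measurability of the covolume function that you take care to justify.
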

\begin {proof}
 For some $n$, the set $\mathcal S$ of lattices with covolume in $[\frac 1n,n]$  has positive measure. If $Z $ does not preserve Haar measure $\nu$,  there is some $A \in Z$  with $A_* \nu = c \nu$ with $c> n^2$. The sets $A^{i  } \mathcal S$, where $i\in \Z$,  are then all disjoint and have the same positive measure.  This is a contradiction.
\end {proof}

The three following lemmas are inspired by an argument Furstenberg used  in his proof of the Borel density theorem \cite[Lemma 3]{Furstenbergnote}.

\begin {lem}\label{firstfurst}
Suppose a group $Z$ acts linearly on $\R^d$	 preserving a finite measure $m$, and $ V = \mathrm{Span}(\mathrm{supp} \, m)$. Then the image of the map $Z \longrightarrow \GL(V)$  is precompact.
\end {lem}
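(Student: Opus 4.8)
The plan is to follow the spirit of Furstenberg's argument: if the image of $Z$ in $\GL(V)$ escapes to infinity, then invariance of $m$ forces its mass either to escape to spatial infinity or to concentrate on a proper subspace, contradicting the hypothesis that $\mathrm{supp}\, m$ spans $V$.

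First I would record two reductions. Since $g_* m = m$ for every $g \in Z$, the support of $m$ is $Z$-invariant, and hence so is its span; thus $V$ is a $Z$-invariant subspace and the map $Z \to \GL(V)$ is well defined. Replacing $\R^d$ by $V$, I may assume $\mathrm{Span}(\mathrm{supp}\, m) = V = \R^d$, so that $m$ is not concentrated on any proper subspace. Next, recall that a subset of $\GL_d(\R)$ is precompact if and only if its largest singular value is bounded above and its smallest singular value is bounded away from $0$. Hence, if the image of $Z$ is not precompact, there is a sequence $g_n \in Z$ with either $\sigma_1(g_n) \to \infty$ or $\sigma_d(g_n) \to 0$; since $Z$ is a group and $\sigma_d(g_n) = \sigma_1(g_n^{-1})^{-1}$, replacing $g_n$ by $g_n^{-1}$ if necessary I may assume $\sigma_1(g_n) \to \infty$.

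The core step is to show that this produces a proper subspace carrying all of $m$. Using the singular value decomposition $g_n = k_n \Sigma_n l_n$, let $w_n$ be a unit top right-singular vector, so that $\|g_n x\| \ge \sigma_1(g_n)\,|\langle x, w_n\rangle|$ for every $x$. Fixing $\eta > 0$ and setting $C_n = \{x : |\langle x, w_n\rangle| \ge \eta\}$, invariance gives $m(C_n) = m(g_n C_n)$, while $g_n C_n$ is contained in the complement of the ball of radius $\sigma_1(g_n)\eta$; since $m$ is finite and $\sigma_1(g_n)\eta \to \infty$, it follows that $m(C_n) \to 0$. Passing to a subsequence with $w_n \to w$ and localizing to a ball $B_R$ before letting $n \to \infty$ (to control the varying directions $w_n$), I would deduce $m(\{x : |\langle x, w\rangle| \ge 2\eta\} \cap B_R) = 0$; letting $R \to \infty$ and then $\eta \to 0$ shows that $m$ is concentrated on the hyperplane $w^\perp$. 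This contradicts $\mathrm{Span}(\mathrm{supp}\, m) = V$, completing the proof.

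The routine parts are the singular-value characterization of precompactness and the inequality $\|g_n x\| \ge \sigma_1(g_n)\,|\langle x, w_n\rangle|$. The one genuinely delicate point is the passage to the limit in the concentration step: because the slab directions $w_n$ move with $n$, I must truncate to a fixed ball (on which $\langle \cdot, w_n\rangle \to \langle \cdot, w\rangle$ uniformly) before sending $n \to \infty$, and only afterwards exhaust $V$ by balls. I expect this localization---reconciling the escape-of-mass estimate for a moving family of slabs with a single limiting hyperplane---to be the main thing requiring care.
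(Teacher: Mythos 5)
Your proof is correct. It is Furstenberg-style, like the paper's, but the mechanism is genuinely different. The paper passes to a subsequence $(z_n)$ and a maximal subspace $W$ on which the $z_n$ converge to a linear map $z$ while $z_n(x)\to\infty$ off $W$; it then applies dominated convergence to the distance function to $z(W)\cup\infty$ in the one-point compactification, concluding that $m$ is supported on the proper subspace $z(W)$. You instead extract the divergence quantitatively via the singular value decomposition: the slab estimate $\|g_nx\|\ge\sigma_1(g_n)\,|\langle x,w_n\rangle|$ shows the mass of $\{|\langle x,w_n\rangle|\ge\eta\}$ escapes to spatial infinity, so after passing to a limit of the top singular directions, $m$ concentrates on the single hyperplane $w^\perp$. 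Your route buys a sharper conclusion (a codimension-one carrier rather than an unspecified proper subspace $z(W)$) and avoids both the compactification and the slightly delicate construction of the maximal subspace $W$, at the cost of the truncation-to-$B_R$ bookkeeping you correctly flag; the paper's soft argument is shorter on the page and is reused almost verbatim for Lemma \ref{3rdfurst}, where the relevant "attracting set" is not a hyperplane. All the steps you mark as delicate (the reduction to $\sigma_1(g_n)\to\infty$ using $g_n^{-1}\in Z$, the uniform convergence $\langle\cdot,w_n\rangle\to\langle\cdot,w\rangle$ on a fixed ball before exhausting by balls, and the final contradiction with $\mathrm{Span}(\mathrm{supp}\,m)=V$) go through as you describe.
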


\begin{proof}
Restricting, it suffices to prove the lemma when $ \mathrm{Span}(\mathrm{supp} \, m)=\R^d$.  Let $(z_n)$ be a sequence in $Z$. After passing to a subsequence, we can assume that  there is some subspace $W \subset \R^d$  such that the maps $z_n |_W$  converge to some linear map $z : W \longrightarrow \R^d$, while $z_n(x) \to \infty$ if $x\in \R^d \setminus W$.  For instance, one can take $W$  to be any subspace  that is maximal among those for which there exists a subsequence $(z_{n_k})$ with  the property that $z_{n_k}(x)$ is bounded for all $x\in W$, and then  pass to a subsequence of such a subsequence.

If in the above, we always have $W = \R^d$, we are done. So, assume $W \neq \R^d$. Pick a metric inducing the one-point compactification topology on $\R^d \cup \infty$ and let $D : \R^d \cup \infty \longrightarrow \R$  be the distance to  the closed set $z(W) \cup \infty$.  By the dominated convergence theorem,
$$\int D(x) \, dm(x) = \int D(z_n(x)) \, dm(x) \longrightarrow 0,$$
 so $m$  is supported on $z(W)$. But as $W $ is a proper subspace, so is $z(W)$.  This contradicts our assumption that $ \mathrm{Span}(\mathrm{supp} \, m)=\R^d$.
\end{proof}

\begin {lem}\label {3rdfurst}
 Suppose that $\R^d=\oplus_i \mathcal L_i$, a  direct sum of subspaces, and that $\mu$ is a finite Borel measure on the Grassmannian of $k$-dimensional subspaces of $\R^d$. 
Suppose that for each $j$, there is a linear map $A_j : \R^d \longrightarrow \R^d$  that acts as a scalar map $v \mapsto \lambda_i v$  on each subspace $\mathcal L_i$, satisfies $\lambda_j > \lambda_i$  for $i\neq j$, and induces a map on the Grassmannian  that preserves $\mu$. Then $\mu$ is concentrated on  subspaces $W\subset \R^d$ that are direct sums of subspaces of the $\mathcal L_i$: $$W=\oplus_i S_i, \ \ S_i \subset \mathcal L_i.$$\end {lem}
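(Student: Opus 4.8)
The plan is to prove the lemma one block at a time. Fix an index $j$, write $M_j := \bigoplus_{i \neq j}\mathcal L_i$ so that $\R^d = \mathcal L_j \oplus M_j$, and let $\pi_j : \R^d \to \mathcal L_j$ be the projection with kernel $M_j$ (this is exactly the coordinate projection of the direct sum $\R^d=\oplus_i \mathcal L_i$ onto its $j$-th summand). Call a $k$-subspace $W$ \emph{$j$-split} if $W = (W \cap \mathcal L_j) \oplus (W \cap M_j)$; equivalently $\pi_j(W)\subseteq W$, i.e.\ $W$ is $\pi_j$-invariant. I will show that $\mu$ is concentrated on $j$-split subspaces for every $j$. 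Granting this and intersecting the finitely many resulting full-measure sets, a $\mu$-generic $W$ is $\pi_j$-invariant for all $j$, so every $w\in W$ satisfies $w=\sum_i \pi_i(w)$ with each $\pi_i(w)\in W\cap \mathcal L_i$; hence $W=\bigoplus_i(W\cap \mathcal L_i)$, which is exactly the desired conclusion.

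The heart of the argument is the dominant-eigenvalue dynamics of $A_j$ on the Grassmannian. Writing $A_j = \lambda_j I_{\mathcal L_j} \oplus B$ with $B=\bigoplus_{i\neq j}\lambda_i I_{\mathcal L_i}$, all of whose scalars are strictly smaller than $\lambda_j$ (and positive in our applications), we have $(B/\lambda_j)^n \to 0$. I claim that for \emph{every} $k$-subspace $W$, every subsequential limit $W_\infty$ of $A_j^n W$ is $j$-split. Set $r=\dim \pi_j(W)$, so $\dim(W\cap M_j)=k-r$. Choosing a basis $w_{k-r+1},\dots,w_k$ of a complement to $W\cap M_j$ in $W$, the projections $\ell_s=\pi_j(w_s)$ are independent and $\lambda_j^{-n}A_j^n w_s = \ell_s + (B/\lambda_j)^n m_s \to \ell_s\in \mathcal L_j$; since these are limits of vectors lying in $A_j^nW$, they lie in $W_\infty$, giving $\dim(W_\infty\cap \mathcal L_j)\geq r$. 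On the other hand $A_j^n(W\cap M_j)=B^n(W\cap M_j)$ is a $(k-r)$-subspace of $M_j$, so after passing to a further subsequence it converges to some $N\subseteq M_j$ with $\dim N=k-r$, and $N\subseteq W_\infty$. As $\mathcal L_j\cap M_j=0$, the subspaces $W_\infty\cap\mathcal L_j$ and $W_\infty\cap M_j$ are independent and their dimensions already sum to $k=\dim W_\infty$; hence $W_\infty=(W_\infty\cap\mathcal L_j)\oplus(W_\infty\cap M_j)$ is $j$-split.

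Finally I pass from the dynamics to the measure. The set $\mathcal S_j$ of $j$-split subspaces is compact, being the finite union over $0\leq r\leq k$ of the images of the compact sets $\gr_r(\mathcal L_j)\times \gr_{k-r}(M_j)$ under the continuous direct-sum map $(P,Q)\mapsto P\oplus Q$. Let $f(W)=\mathrm{dist}(W,\mathcal S_j)$ for a fixed metric on the Grassmannian; then $f$ is continuous, bounded, and vanishes exactly on $\mathcal S_j$. By the previous paragraph every subsequential limit of $A_j^nW$ lies in $\mathcal S_j$, so $f(A_j^nW)\to 0$ for every $W$. Since $A_j$ preserves $\mu$ we have $\int f\,d\mu=\int f(A_j^nW)\,d\mu(W)$ for all $n$, and letting $n\to\infty$ the bounded convergence theorem (using that $\mu$ is finite and $f$ bounded) yields $\int f\,d\mu=0$. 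Thus $\mu$ is concentrated on $\mathcal S_j$, establishing the single-block claim.

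I expect the main obstacle to be the subspace-limit analysis in the second paragraph: organizing it so that the limiting directions coming from $W\cap M_j$ and from $\pi_j(W)$ are shown to span a full $k$-dimensional split subspace rather than collapsing onto one another. The dimension count $\dim(W_\infty\cap\mathcal L_j)+\dim(W_\infty\cap M_j)\geq k$ is what makes this robust, since it avoids having to control individual limiting directions inside $M_j$. The combination step and the measure-theoretic argument are then routine.
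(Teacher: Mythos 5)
Your proof is correct and follows essentially the same route as the paper's: the paper also fixes $j$, lets $\mathcal Z_j$ be the closed set of subspaces of the form $S_j\oplus P'$ with $S_j\subset\mathcal L_j$ and $P'\subset\oplus_{i\neq j}\mathcal L_i$, and applies dominated convergence to the distance function $D_j$ under iteration of $A_j$. The only difference is that you supply the subsequential-limit dimension count justifying $D_j(A_j^nW)\to 0$, which the paper merely asserts (and, like the paper, you implicitly use that the dominance of $\lambda_j$ holds in absolute value, which is true in the application).
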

\begin {proof}
The argument is similar to that of Lemma \ref {firstfurst}.  Denote the Grassmannian of $k$-subspaces of $\R^d$ by $Gr(k,d)$, fix $j$ and let $\mathcal Z_j$ be the closed subset of $Gr(k,d)$ consisting of all subspaces of the form $ S_j \oplus P'$,  where $S_j \subset \mathcal L_j$ and $P' \subset \oplus_{i\neq j} \mathcal L_i$. Given an element $P \in Gr(k,d)$, let $D_j(P)$ be the distance from $P$ to $\mathcal Z_j$, with respect to some metric inducing the natural topology. Then for each $P \in Gr(k,d)$,  we have $D((A_j)^n(P)) \to 0$ as $n\to \infty$. Hence, the dominated convergence theorem says that $$\int D(P) \, d\mu(P) = \int D((A_j)^n(P)) \, d\mu(P) \to 0.$$
So, $\mu$ is supported on $\mathcal Z_j$. This works for all $j$,  so the lemma follows.
\end {proof}

\begin{lem}
		Let $V,W$ be two vector spaces and let $\mathcal L(V,W)$ be the space of all linear maps from $V$ to $W$. Suppose that $X \subset \mathcal L(V,W)$ is a random subspace whose law is invariant under the action of $SL(V) \times SL(W)$. Then almost surely, $X$ is either $\{0\}$ or $\mathcal L(V,W)$.\label{4thfurst}
\end{lem}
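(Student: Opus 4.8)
The plan is to reduce, via the Furstenberg-type Lemma \ref{3rdfurst}, to the case where $X$ is spanned by matrix units, and then to exploit the connectedness of $SL(V)\times SL(W)$ together with the irreducibility of $\mathcal L(V,W)$ as a representation. Fix bases so that $V=\R^m$, $W=\R^n$ and $\mathcal L(V,W)$ is the space of $n\times m$ matrices, with $(g,h)\in SL(V)\times SL(W)$ acting by $T\mapsto hTg^{-1}$. Let $\mu$ be the law of $X$, a probability measure on $\bigsqcup_k Gr(k,nm)$; since the action preserves dimension, I may treat each dimension separately and regard $\mu$ as a finite measure on a single $Gr(k,nm)$. (If $n=m=1$ the claim is trivial, so assume otherwise.)

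First I would apply Lemma \ref{3rdfurst} with the subspaces $\mathcal L_i$ taken to be the coordinate lines $\R E_{pq}$ spanned by the matrix units. A diagonal element $h=\mathrm{diag}(b_1,\dots,b_n)$, $g^{-1}=\mathrm{diag}(a_1^{-1},\dots,a_m^{-1})$ of the torus, with $\prod_p b_p=\prod_q a_q=1$, acts on $\R E_{pq}$ by the scalar $b_p/a_q$. For any target entry $(p_0,q_0)$ one can choose such a torus element making $\R E_{p_0q_0}$ strictly dominant: taking $\log b_{p_0}=(n-1)t$ and $\log b_p=-t$ for $p\neq p_0$, and $\log a_{q_0}=-(m-1)s$ and $\log a_q=s$ for $q\neq q_0$, with $s,t>0$, the log-weight $\log b_{p_0}-\log a_{q_0}=(n-1)t+(m-1)s$ strictly exceeds every other $\log b_p-\log a_q$. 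Since these maps preserve $\mu$, Lemma \ref{3rdfurst} shows that $\mu$ is concentrated on direct sums of subspaces of the lines $\R E_{pq}$, i.e.\ on spans of subsets of the matrix units. In particular $\mu$ is supported on the \emph{finite} set $F$ of such coordinate subspaces.

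Next, because $F$ is finite (hence discrete in the Grassmannian) and $g_*\mu=\mu$ for every $g\in SL(V)\times SL(W)$, each $g$ permutes $F=\mathrm{supp}\,\mu$. For fixed $X_0\in F$ the orbit map $g\mapsto gX_0$ is continuous into the discrete set $F$ and sends the identity to $X_0$, so by connectedness of $SL(V)\times SL(W)$ it is constant; thus every subspace in $\mathrm{supp}\,\mu$ is fixed by the whole group. Finally, a fixed subspace is a subrepresentation of $\mathcal L(V,W)\cong W\otimes V^*$, which is irreducible over $\R$ as a representation of $SL(V)\times SL(W)$: any nonzero invariant subspace contains, after suitable determinant-one row and column operations and taking linear combinations, a single matrix unit, and hence (applying signed permutations) all of them. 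Therefore the only fixed subspaces are $\{0\}$ and $\mathcal L(V,W)$, and $X\in\{\{0\},\mathcal L(V,W)\}$ almost surely.

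I expect the main obstacle to be the first step, namely arranging the hypotheses of Lemma \ref{3rdfurst}: one must verify the weight computation and, in particular, that each coordinate line can be made strictly dominant by a single determinant-one torus element under the constraints $\prod_p b_p=\prod_q a_q=1$. Once this is done, the connectedness argument and the verification of irreducibility are routine.
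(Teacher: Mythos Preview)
Your argument is correct, and it takes a different route from the paper's own proof. The paper simply observes that $SL(V)\times SL(W)$ is semisimple and hence a \emph{m.a.p.\ group} in Furstenberg's sense, then quotes \cite[Lemma~3]{Furstenbergnote} together with the exterior-power trick to conclude that any finite invariant measure on subspaces of $\mathcal L(V,W)$ is supported on $SL(V)\times SL(W)$-invariant subspaces; irreducibility of $\mathcal L(V,W)$ then finishes. The paper immediately follows this with a remark that the argument is ``a bit silly'' because it imports facts harder than the lemma itself, and invites the reader to give a direct proof using well-chosen sequences in $SL(V)\times SL(W)$.

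That is precisely what you do: you use Lemma~\ref{3rdfurst} with the diagonal torus to force the support onto coordinate subspaces, then exploit the finiteness of that support together with the connectedness of $SL(V)\times SL(W)$ to see that each such subspace is invariant, and finally invoke irreducibility of $W\otimes V^*$. Your weight computation is correct (and covers the edge cases $n=1$ or $m=1$, the case $n=m=1$ being trivial), and the connectedness step is clean since the support is a finite, hence discrete, subset of the Grassmannian. What you gain over the paper's proof is self-containment: no appeal to the m.a.p.\ property of semisimple groups is needed, only Lemma~\ref{3rdfurst} and elementary facts. What the paper's proof gains is brevity, at the cost of citing heavier external results.
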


Here, $(A,B) \in SL(V) \times SL(W)$ acts by sending $T\in \mathcal L(V,W)$ to $ATB^{-1}$.

\begin{proof}
 $SL(V) \times SL(W)$ is semisimple, and hence is a \emph{m.a.p.\ group}, in the sense of Furstenberg's paper  \cite{Furstenbergnote}. By \cite[Lemma 3]{Furstenbergnote} and the exterior power trick in the subsequent `Theorem', any finite $SL(V) \times SL(W)$-invariant measure on the set of subspaces of $\mathcal L(V,W)$ is supported on subspaces that are invariant under the $SL(V) \times SL(W)$ action.  But it is easy to check that the only such subspaces are the two trivial ones. \end{proof}

\begin{remark}
The proof of Lemma \ref{4thfurst} above is a bit silly since it relies on certain well-known facts, e.g.\ that semisimple groups are m.a.p., that are considerably harder to prove than Lemma \ref{4thfurst} itself. Really, one  can just prove the lemma by applying the arguments from Furstenberg's paper to certain well-chosen sequences of elements in $SL(V) \times SL(W)$. We encourage the reader to do this, while we lazily give the short proof above.
\end{remark}

\subsection{Transverse IRSs}\label {transversesec}
Let $A$ and $\Gamma$ be locally compact, second countable topological groups, and suppose $\Gamma$ acts by continuous automorphisms on $A $. Let $\Gamma_{triv}$ be the kernel of the action, and let $G=A \rtimes \Gamma$ be the associated semidirect product. 

We call a subgroup $H\leq G$ \emph {transverse} if $H\cap A=\{0\}$. For example, in the direct product $A \times A$,  the diagonal subgroup is transverse, as is the second factor.

\begin{thm}[Structure of transverse IRSs in semidirect products, part 1]
  \label{thm:general-irs1}
  Suppose $G=\R^d \rtimes \Gamma$ and $H$ is a transverse IRS of $G=\R^d \rtimes \Gamma$. Then  $\pr H \leq  \Gamma_{triv}$ almost surely.  
  \end{thm}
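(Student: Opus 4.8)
The plan is to use the cocycle machinery of \S\ref{cocycle} to reduce the statement to the elementary fact that a finite measure on a vector space cannot be invariant under translation by a nonzero subspace. First I would record what transversality buys us. Since $H\cap\R^d=S_H(I)=\{0\}$, Claim~\ref{clm:cocycle} says that each $S_H(M)$ is a single coset of $\{0\}$, i.e.\ a single vector. Thus for a transverse $H$ the cocycle $S_H$ is single-valued: for every $M\in\pr H$ there is a unique $v_H(M)\in\R^d$ with $(v_H(M),M)\in H$, and \eqref{eq:S} becomes the $1$-cocycle identity $v_H(MN)=v_H(M)+Mv_H(N)$. In other words a transverse $H$ is exactly the graph of a $1$-cocycle $v_H\colon \pr H\to\R^d$, so the assignment $H\mapsto(\pr H,v_H)$ identifies the transverse locus of $\subg$ with the set of pairs $(L,c)$, where $L\leq\Gamma$ and $c$ is a $1$-cocycle $L\to\R^d$. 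Under this identification the law of $H$ becomes a probability measure on such pairs, and the goal is to show that the $L$-marginal is supported on subgroups contained in $\Gamma_{triv}$.

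The core of the argument is to disintegrate this law over the first coordinate $L=\pr H$ and exploit invariance under the translation subgroup $\R^d=\{(w,I)\}<G$. By \eqref{eq:S-conj} with $N=I$, conjugating by $(w,I)$ fixes $\pr H$ and sends $v_H$ to $v_H+\partial_w$, where $\partial_w$ is the coboundary $M\mapsto(M-I)w$. Writing $\kappa_L$ for the conditional law of the cocycle $v_H$ given $\pr H=L$ (a probability measure on the vector space of $1$-cocycles $L\to\R^d$), invariance of the IRS under the $\R^d$-action forces, for almost every $L$, the measure $\kappa_L$ to be invariant under translation by every coboundary $\partial_w$, $w\in\R^d$. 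The vectors $\partial_w$ span a linear subspace $B_L$ of the cocycle space, namely the image of the linear map $w\mapsto\partial_w$, and $B_L\neq\{0\}$ precisely when $(M-I)w\neq 0$ for some $M\in L$, $w\in\R^d$, i.e.\ precisely when $L\not\leq\Gamma_{triv}$.

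The finish is then immediate. A finite measure $\kappa_L$ on a vector space cannot be invariant under translation by a nonzero subspace $B_L$: pushing forward by any linear functional $\ell$ with $\ell(B_L)\neq 0$ yields a nonzero finite measure on $\R$ invariant under translation by all of $\ell(B_L)=\R$, which is absurd. Hence $B_L=\{0\}$ for almost every $L$, which unwinds to $(M-I)w=0$ for all $M\in L$ and all $w$, i.e.\ $\pr H=L\leq\Gamma_{triv}$ almost surely. Note that this disintegration argument sidesteps the uncountability of $\Gamma$ entirely, since the bad set of group elements is absorbed into the a.e.\ quantifier over $L$; in particular no appeal to ergodicity or to a per-element estimate $\mathbb{P}[M\in\pr H]=0$ is needed.

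The step I expect to be the main obstacle is not conceptual but a piece of measurable bookkeeping: the cocycle spaces $Z^1(L,\R^d)$ vary with $L$, so realizing ``the conditional law of $v_H$ given $\pr H=L$'' and invoking the standard principle that invariance of a joint measure under a fibre-preserving action descends to almost every fibre requires a measurable structure on the space of pairs $(L,c)$ compatible with the Chabauty topology. Once that framework is set up, the translation argument above closes the proof; the simply connected nilpotent refinement should be accessible by the same disintegration, with the role of $\mathrm{Im}(M-I)$ played by the appropriate Furstenberg-type input of Lemmas~\ref{firstfurst}--\ref{3rdfurst}.
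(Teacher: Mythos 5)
Your reduction rests on the same mechanism as the paper's proof: conjugating by $(w,I)$ fixes $\pr H$ and translates the (single-valued, by transversality) cocycle $S_H$ by the coboundary $M\mapsto (M-I)w$, and no finite measure can survive being shifted in this way. The gap is in the step you dismiss as ``measurable bookkeeping,'' namely the disintegration of the law of $H$ over the partition $\{\pr H = L\}$. When $\Gamma$ is not discrete (the case that matters, e.g.\ for the application to parabolic subgroups), $\pr H$ is in general a \emph{non-closed} subgroup of $\Gamma$ --- for instance $H=\{\big((n,m),\,n+m\alpha\big):n,m\in\Z\}\leq \R^2\rtimes\R$ (trivial action, $\alpha$ irrational) is a closed transverse subgroup whose projection $\Z+\alpha\Z$ is dense --- so $H\mapsto \pr H$ does not take values in $\mathrm{Sub}_\Gamma$ or in any evident standard Borel space, and the fibres $\{\pr H=L\}$ do not form a measurable partition in the sense needed for regular conditional probabilities carrying the invariance you use. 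If you retreat to a conditioning that does exist, say on the $\sigma$-algebra of $\R^d$-conjugation-invariant Borel sets, the conditional measures are indeed a.s.\ invariant under each $H\mapsto (w,I)^{-1}H(w,I)$ (countable dense set of $w$, then a closure argument), but the atoms of that $\sigma$-algebra need not coincide with the fibres of $\pr$, and --- the real sticking point --- you can no longer evaluate the cocycle at a fixed $M_0$: for every individual $M_0\in\Gamma\setminus\Gamma_{triv}$ the event $\{M_0\in\pr H\}$ may have probability zero even though $\{\pr H\not\leq\Gamma_{triv}\}$ has positive probability. Your remark that ``the bad set of group elements is absorbed into the a.e.\ quantifier over $L$'' is exactly where the argument breaks, because that a.e.\ quantifier is not available.

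The paper's proof solves precisely this selection problem without any disintegration. It chooses a precompact open $U\subset\Gamma$ disjoint from $\Gamma_{triv}$ and meeting $\pr H$ with positive probability, small enough that $b_1\leq L((M-I)w)\leq b_2$ for all $M\in U$, for a fixed functional $L$, vector $w$, and constants $0<b_1<b_2$; it then uses the continuously varying Haar measures on the groups $\pr H$ (citing \cite[Claim A.2]{biringer2017unimodularity}) to select a Haar-random $M\in\pr H\cap U$. This makes $S_H(M)$ an honest $\R^d$-valued random variable whose law $m_U$ is unchanged by conjugation, yet whose image under $L$ is shifted by at least $nb_1$ and at most $nb_2$ after $n$ conjugations by $(w,I)$; sparsely chosen disjoint windows $[a_1+n_kb_1,a_2+n_kb_2]$ then carry equal positive mass, contradicting finiteness. (The uniform two-sided bound over $M\in U$ replaces your fixed translation, since the selected $M$ varies with $H$.) If you replace ``condition on $\pr H=L$ and evaluate at $M_0\in L$'' by such a measurable random selection of an element of $\pr H\cap U$, the rest of your argument --- the single-valued cocycle, the coboundary computation, the translation contradiction --- goes through and is essentially the paper's proof.
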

  
  \begin{remark}
Theorem~\ref{thm:general-irs1} also applies when $G=S \rtimes \Gamma$ and $S$ is a closed subgroup of $\R^d$. Indeed, the $\Gamma$-action on such an $S$ extends to the span of $S$ to which Theorem \ref{thm:general-irs1} applies, and any transverse IRS of $G=S \rtimes \Gamma$ induces a transverse IRS of $G = \rm{span}(S) \rtimes \Gamma$. 
\end{remark}

\begin{remark}
If the action $\Gamma \circlearrowleft A$ is faithful (as it
is, for example, in the case of the special affine groups), then Theorem~\ref{thm:general-irs1} implies there
are no nontrivial transverse IRSs of $G$.  Also, note that the theorem fails when $A$ is not torsion-free abelian. For instance, if $A$ is finite then a random conjugate of $\Gamma$ is an IRS of $A \rtimes \Gamma$. And if $A$ is not abelian, the antidiagonal $$\{(g,g^{-1}) \ | \ g \in A\} \subset A \rtimes A,$$
where $a\in A $ acts on $x\in  A$ by $a(x)=a^{-1}xa$, is a normal
subgroup of $A \rtimes A$  that does not  project into
$A_{triv}=Z(A)$. However, we expect that for general $A$, if $H$
is a transverse IRS of $A \rtimes\Gamma$, then the action of any
element of $\pr H$ on $A$ is well-approximated by inner automorphisms
of $A$ in some sense. 
\end{remark}

\begin {proof}[Proof of Theorem \ref{thm:general-irs1}]
Let $H$  be a nontrivial transverse IRS of $G$.  In order to get a contradiction, suppose that it is not the case that $\pr H \leq  \Gamma_{triv}$ almost surely. Then there is an open subset $U \subset \Gamma$ with compact closure such that $U \cap  \Gamma_{triv} = \emptyset$, and $\pr H \cap U \neq \emptyset$ with positive probability.  In addition we choose $U$ small enough so that  for some $w\in \R^d $, some $0 < b_1 < b_2\in \R_+$ and some linear $L : \R^d \longrightarrow \R$,  we have that \begin{equation}
	 b_1 \leq L( (M-I)w ) \leq b_2, \text{ for all } M \in U.\label{defw}
\end{equation}

 Choose a left Haar measure $\mu_H$ on $\pr H$. By \cite[Claim A.2]{biringer2017unimodularity},  this can be done so that the $ \mu_H$  vary continuously  with $H \in \mathrm{Sub}_G $,  when regarded as measures on $\Gamma\geq \pr H$. 
 
 Because $H$ is transverse, $S_H(M)$ is a single element of $\R^d$ for any $M \in \pr H$. Selecting first a random $H \in \subg$ with $\pr H \cap U \neq \emptyset$, and then a $\mu_H$-random $M\in \pr H \cap U$, we can interpret the cocycle $S_H(M)$ as an  $\R^d$-valued random variable. Here, note that $\mu_H(\pr H \cap U)$  is always finite and nonzero, since $\pr H \cap U$  is nonempty, pre-compact and open in $H$.

%

Taking $w\in \R^d $  as in the first paragraph of the proof, let $H^{w} = (w,I)^{-1} H (w,I) $.  Since $ \pr H = \pr H^{w}$, we get a map $(H,M) \mapsto (H^w,M)$  defined on the domain \begin{equation} \big \{(H,M) \ | \ H \in \subg, \, \pr H \cap U \neq \emptyset,\,  M \in \pr H \cap U\big \} \label{domain}\end{equation}
 of the random variable $S_H(M)$.  As $H$ is an IRS,  this map is measure preserving, so the distributions of $S_{H^{w}}(M) $ and  $S_H(M)$ are equal, say to a probability measure $m_U $ on $\R^d$.

By~\eqref{eq:S-conj}, we have  $S_{H^w}(M) = S_H(M) + (M-I)w$  for all $M \in \pr H = \pr H^{w}.$ Iterating  the conjugation by $w$ and using \eqref{defw}, 
\begin{equation}
L \big ( S_H(M)  \big ) + nb_1 \leq L \big ( S_{H^{nw}}(M) \big )\leq L \big ( S_H(M)  \big ) + nb_2, \ \forall n\in \N. \label{Leqn}
\end{equation}
This contradicts  the fact that $m_U$  is a probability measure. For suppose $[a_1,a_2] \subset \R$  is an interval with $m_U ( L^{-1}([a_1,a_2]) )>0$. For a sufficiently sparse sequence $n_k\in \N$, the intervals
$[a_1+n_kb_1,a_2+n_kb_2] \subset \R$ are all disjoint. Hence, \begin{align*}1 \geq \sum_k m_U\big (L^{-1}[a_1+n_kb_1,a_2+n_kb_2]\big ) \geq \sum_k m_U(L^{-1}[a_1,a_2]) = \infty. \end{align*}
This contradiction proves the theorem.\end {proof}
%
%
%

\begin{thm}[Structure of transverse IRSs in semidirect products, part 2]
  \label{thm:general-irs2}
  Suppose $G=A \rtimes \Gamma$, $A$ is a simply connected nilpotent Lie group, $H$ is a transverse IRS of $G=A \rtimes \Gamma$ and $\lambda $ is the law of $H$. Let $$\mathcal H = \cup _{H \in \mathrm{supp} \, \lambda} H.$$  
If $\mathcal V \subseteq A $ is the Zariski closure of the set of first coordinates of all $(v,M) \in \mathcal H $,  then $\mathcal V$  is $\Gamma$-invariant  and the image  of the map $Z( \pr \mathcal H)\longrightarrow \Aut(\mathcal V)$ is precompact. 
\end{thm}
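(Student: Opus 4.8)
The plan is to pass to the Lie algebra, deduce $\Gamma$-invariance from conjugation-invariance of $\mathrm{supp}\,\lambda$, and then manufacture a $Z(\pr\mathcal H)$-invariant finite measure on $A$ to which the Furstenberg-type Lemma~\ref{firstfurst} applies. Since $A$ is simply connected nilpotent, $\exp\colon\mathfrak a\to A$ is a diffeomorphism and a polynomial isomorphism of real algebraic varieties, so $\log=\exp^{-1}$ intertwines Zariski closures in $A$ and in $\mathfrak a$. The $\Gamma$-action is by automorphisms of $A$, which correspond to Lie algebra automorphisms, giving a linear action $\Gamma\to\GL(\mathfrak a)$ and an embedding $\Aut(A)\hookrightarrow\GL(\mathfrak a)$. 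Writing $\mathcal V_0$ for the set of first coordinates of elements of $\mathcal H$, so that $\mathcal V$ is the Zariski closure of $\mathcal V_0$, I record the elementary fact that for any subset $S$ of a finite-dimensional vector space one has $\mathrm{Span}(S)=\mathrm{Span}(\overline S^{\,Zar})$ (the span is Zariski-closed and contains $S$); hence $V:=\mathrm{Span}(\log\mathcal V)=\mathrm{Span}(\log\mathcal V_0)$.

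For the $\Gamma$-invariance of $\mathcal V$: as $\lambda$ is conjugation-invariant, so is its support, and therefore $\mathcal H$ is conjugation-invariant as a set. By the nilpotent analog of the computation leading to~\eqref{eq:S-conj}, conjugation by $(e,N)$ sends $(v,M)\in H$ to $(N^{-1}v,\,N^{-1}MN)\in H^{(e,N)}$, and $H^{(e,N)}$ again lies in $\mathrm{supp}\,\lambda$. Thus $\mathcal V_0$ is invariant under $v\mapsto N^{-1}v$ for every $N\in\Gamma$, i.e.\ $\Gamma$-invariant. Since $\Gamma$ acts by polynomial automorphisms it preserves Zariski closures, so $\mathcal V$ is $\Gamma$-invariant; in particular $V\subseteq\mathfrak a$ is $Z(\pr\mathcal H)$-invariant.

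It remains to prove precompactness. Each $N\in Z(\pr\mathcal H)$ acts on $\mathcal V$ through its restriction to $V$, and the restriction map $\{g\in\GL(V):g(\log\mathcal V)=\log\mathcal V\}\to\Aut(\mathcal V)$ is continuous, so it suffices to show the image of $Z(\pr\mathcal H)$ in $\GL(V)$ is precompact. To build an invariant measure I use transversality, which makes $S_H(M)$ a single point for each $M\in\pr H$, with $(H,M)\mapsto S_H(M)$ continuous. Fix a precompact open $U\subseteq\Gamma$ with $\lambda(\{\pr H\cap U\neq\emptyset\})>0$, choose Haar measures $\mu_H$ on $\pr H$ varying continuously in $H$ as in \cite[Claim A.2]{biringer2017unimodularity}, and let $m_U$ be the law of $S_H(M)$ where $H\sim\lambda$ (conditioned on $\pr H\cap U\neq\emptyset$) and $M$ is $\mu_H$-distributed on $\pr H\cap U$ after normalization. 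For $N\in Z(\pr\mathcal H)$, conjugation by $(e,N)$ preserves $\pr H$ (since $N^{-1}MN=M$ for all $M\in\pr H$), hence preserves both the conditioning event and the normalized Haar measure, while by~\eqref{eq:S-conj} it sends $S_H(M)\mapsto N^{-1}S_H(M)$; so $m_U$ is invariant under $v\mapsto N^{-1}v$, and $\mathrm{supp}\,m_U\subseteq\mathcal V$.

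Finally, one $m_U$ may only span a proper subspace of $V$, so I span $V$ by combining finitely many. Pick $p_1,\dots,p_r\in\mathcal V_0$ with $\log p_1,\dots,\log p_r$ spanning $V$, write $p_i=S_{H_i}(M_i)$, let $U_i$ be a small neighborhood of $M_i$ (the event $\{\pr H\cap U_i\neq\emptyset\}$ is open and contains $H_i\in\mathrm{supp}\,\lambda$, so has positive $\lambda$-measure), and set $m=\tfrac1r\sum_i m_{U_i}$. By continuity of $(H,M)\mapsto S_H(M)$, points of $\mathrm{supp}\,m$ lie arbitrarily close to each $p_i$, and since spanning is an open condition we get $\mathrm{Span}(\log\,\mathrm{supp}\,m)=V$, with $m$ a finite $Z(\pr\mathcal H)$-invariant measure. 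Lemma~\ref{firstfurst} then yields precompactness of the image of $Z(\pr\mathcal H)$ in $\GL(V)$, hence in $\Aut(\mathcal V)$. I expect the main obstacle to be precisely this last construction: arranging the invariant measure so that its support spans all of $V$ while controlling the possible non-compactness of $\pr H$ through the localization to $U$ and the continuous choice of the $\mu_H$; the Lie-algebra and Zariski bookkeeping (that $\log$ intertwines closures and that a set and its Zariski closure have equal span) is routine but must be set up carefully.
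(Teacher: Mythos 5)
Your proposal is correct and follows essentially the same route as the paper: $\Gamma$-invariance from conjugation-invariance of $\mathrm{supp}\,\lambda$, the $Z(\pr\mathcal H)$-invariant measure $m_U$ built from the cocycle $S_H$ localized to a precompact $U$, transport to the Lie algebra via $\log$, and Lemma~\ref{firstfurst}. The only difference is cosmetic: you average finitely many $m_{U_i}$ chosen so that the combined support spans, whereas the paper applies Lemma~\ref{firstfurst} to each $V_U=\mathrm{Span}(\mathrm{supp}\,\log_*m_U)$ separately and takes the sum of the resulting subspaces over all $U$.
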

Here $Z( \pr \mathcal H)$ denotes the centralizer of $ \pr \mathcal H$ in $\Gamma$, and the Zariski closure of a subset of $A$ is the smallest connected Lie subgroup of $A$ containing that subset. 

\begin{remark}
Theorem~\ref{thm:general-irs2} also applies when $G=S \rtimes \Gamma$ and $S$ is a closed subgroup of some simply connected nilpotent Lie group $A$. Indeed, the $\Gamma$-action on such an $S$ extends to the Zariski closure $\overline S$ \cite[Theorem 2.11]{Raghunathandiscrete}, to which Theorem \ref{thm:general-irs2} applies, and any transverse IRS of $G=S \rtimes \Gamma$ induces a transverse IRS of $G = \overline S \rtimes \Gamma$. See \cite[Chapter II]{Raghunathandiscrete} for more information about the `Zariski closure' operation in simply connected nilpotent Lie groups, which behaves very similarly to `span' in $\R^d$.
\end{remark}

\begin{remark}
 To illustrate Theorem~\ref{thm:general-irs2}, suppose $A=\Gamma =\R^2$ and $(s,t) \in \Gamma$
acts by a rotation on $A$ with angle $s$. Then if
$$H_\theta = \Big\{ \big ( ( t \cos \theta, t \sin \theta ), (0,t) \big ) \ | \ t \in \R \Big \} \leq A \rtimes \Gamma,$$
 we obtain a transverse IRS of $G=A \rtimes \Gamma$ by randomly picking $\theta \in [0,2\pi]$  against Lebesgue measure. Here,  the centralizer $Z( \pr \mathcal H)$  is all of $\Gamma $, which acts compactly on $A$.

\end{remark}

\begin {proof}[Proof of Theorem \ref{thm:general-irs2}]
 The $\Gamma $-invariance  of $\mathcal V$  is  immediate.  For if $N \in \Gamma$  and $(v,M)\in \mathcal H$, \begin{equation}
(e,N)^{-1} (v,M) (e,N)= (N^{-1}v, N^{-1}MN). 
\label{conj2}	
 \end{equation}
Here, we write $e$ for the identity element since $A$ is not necessarily abelian. As $\supp \, \lambda $ is conjugation invariant,  the set of all $v\in A$  such that $(v,M)\in \mathcal H $  for some $M $ is $\Gamma$-invariant. Hence, its Zariski closure $\mathcal V$ is also $\Gamma$-invariant.

 As in the proof of Theorem \ref{thm:general-irs1}, choose $U \subset \Gamma$ with compact closure such that $\pr H \cap U \neq \emptyset$ with positive probability. Let $N\in Z (\pr \mathcal H)$ and write $H^N = (e,N)^{-1}H (e,N)$.  Substituting $N^{-1}MN=M$ in \eqref{conj2} we see that $\pr H =\pr H^N$, so as before the distribution of $S_{H^N}(M)$ is the same as $m_U$,  the distribution of $S_{H}(M)$. Now, though, \eqref{conj2} implies that $$S_{H^N}(M)=N^{-1}(S_{H}(M)).$$
So, the measure $m_U$ on $A$ is $Z (\pr \mathcal H)$-invariant.  

Since $A$ is a simply connected nilpotent Lie group, there is a diffeomorphism $\log : A \longrightarrow \mathfrak a$ to the Lie algebra $\mathfrak a$ that is an inverse for the Lie group exponential map \cite[1.127]{knapp2013lie}. Then $\log_* m_U$ is a probability measure on $\mathfrak a$ that is invariant under the induced action of $Z (\pr \mathcal H)$ on $\mathfrak a$. By Lemma \ref{firstfurst}, $Z (\pr \mathcal H)$ acts precompactly on the span $V_U = \mathrm{Span}(\mathrm{supp} \, \log_* m_U)$, and therefore it acts precompactly on the sum $V$ of all $V_U$, as $U$ ranges over all possible choices. But the Zariski closure $\mathcal V = \exp(V)$, so then $Z (\pr \mathcal H)$ acts precompactly on $\mathcal V$ as well.
%
\end {proof}

We present an easy corollary of Theorem \ref{thm:general-irs1}:
\begin{cor}
  \label{cor:RR}
   The only ergodic IRSs of the affine group $\R \rtimes \R^+$ are the point
  masses on its  closed, normal subgroups: $\{e\}, \, \R ,\,\R \rtimes \R^+$ and $ \R \rtimes \{\alpha^n \ | \ n \in \Z\}, $  where $\alpha>0$.
\end{cor}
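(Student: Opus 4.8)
The plan is to classify an ergodic IRS $H$ of $G=\R\rtimes\R^+$ according to the closed subgroup $H\cap A$ of $A=\R$, using the fact that the closed subgroups of $\R$ are exactly $\{0\}$, the infinite cyclic groups $c\Z$ with $c>0$, and all of $\R$. First I would record the one conjugation formula I need: since $\Gamma=\R^+$ is abelian we have $M^N=M$, so specializing \eqref{eq:S-conj} to $M=I$ gives $(v,I)^{(w,N)}=(N^{-1}v,I)$. Thus conjugating $H$ by $(w,N)$ replaces $H\cap A$ by $N^{-1}(H\cap A)$ and leaves $\pr H$ unchanged. In particular the \emph{type} of $H\cap A$ (trivial, infinite discrete, or all of $\R$) is a conjugation-invariant function of $H$, hence almost surely constant by ergodicity. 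This splits the proof into three cases.

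If $H\cap A=\{0\}$ almost surely, then $H$ is transverse; since the $\R^+$-action on $\R$ is faithful we have $\Gamma_{triv}=\{1\}$, so Theorem~\ref{thm:general-irs1} gives $\pr H=\{1\}$ almost surely, and together with $H\cap A=\{0\}$ this forces $H=\{e\}$. If $H\cap A=\R$ almost surely, then $A\leq H$, so $H=A\rtimes K$ with $K=\pr H$ a closed subgroup of $\R^+$. Because $A$ is normal and conjugation preserves $\pr$, every such $A\rtimes K$ is a normal subgroup, i.e.\ a fixed point of the conjugation action; an ergodic measure supported on fixed points is a point mass, so $K$ is a single closed subgroup of $\R^+\cong(\R,+)$. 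The closed subgroups of $\R^+$ are $\{1\}$, $\R^+$, and $\{\alpha^n\mid n\in\Z\}$ for some $\alpha>0$, yielding $H=\R$, $\R\rtimes\R^+$, or $\R\rtimes\{\alpha^n\mid n\in\Z\}$ respectively.

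The remaining case, $H\cap A=c\Z$ infinite discrete, is the one I expect to be the main point, and it is ruled out by a scaling argument. Letting $c=c(H)>0$ denote the positive generator of $H\cap A$, the conjugation formula gives $c\big(H^{(w,N)}\big)=N^{-1}c(H)$. Since $H$ is an IRS, the law of $c$ therefore equals the law of $N^{-1}c$ for every $N\in\R^+$; equivalently, the pushforward of the law of $c$ under $\log$ is a translation-invariant Borel probability measure on $\R$. No such measure exists, so this case cannot occur. Assembling the three cases gives exactly the list $\{e\},\,\R,\,\R\rtimes\R^+$ and $\R\rtimes\{\alpha^n\mid n\in\Z\}$, each a point mass on a closed normal subgroup, completing the proof.
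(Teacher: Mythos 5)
Your proof is correct and follows essentially the same route as the paper: handle the transverse case via Theorem~\ref{thm:general-irs1}, and otherwise use the $\R^+$-scaling invariance of the law of $H\cap\R$ to force $H\cap\R=\R$, after which $H=\R\rtimes\pr H$ is normal and ergodicity gives a point mass. The only difference is that you spell out explicitly the no-scale-invariant-probability-measure argument that rules out $H\cap\R=c\Z$, which the paper leaves implicit in the sentence ``its law is invariant under the $\R^+$ action\ldots so $H\cap\R=\R$ almost surely.''
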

Note that this stands in contrast to other metabelian groups (e.g.,
lamplighter groups) that have a rich set of invariant random
subgroups~\cite{bowen2015invariant}.
\begin{proof}[Proof of Corollary~\ref{cor:RR}]
  Let $H$ be a non-trivial ergodic IRS of $\R \rtimes \R^+$. If $H$  is transverse, then $\pr H =\{1\} \in \R_+$, by Theorem \ref{thm:general-irs1}. Hence $H=\{e\}$.

 Otherwise, the random subgroup $H \cap \R \subset \R$ is nontrivial almost surely, and its law is
  invariant  under the $\R^+$ action (i.e., multiplication by a scalar). So, $H \cap \R = \R$  almost surely, and $H = \R \rtimes \pr H$. But $\pr H$ is an ergodic IRS of $\R^+$, and thus must be a
  point mass on either $\{1\}$, $\R^+$ or $ \R \rtimes \{\alpha^n \ | \ n \in \Z\}, $  where $\alpha>0$. We have thus proved the claim.
\end{proof}

\section{IRSs of parabolic subgroups}
\label {parabolicsec}

 To recap our notation: 
$W=S_1 \oplus \cdots \oplus S_n$  is a real vector space, $\mathcal{F}$  is the associated flag $$0=W_0 < W_1 < \cdots < W_n = W, \ \ \ W_k = \oplus_{i=1}^k S_i,$$
 $P < \SL(W)$ is the  parabolic subgroup  stabilizing $\mathcal F$,   $V < P$    is the unipotent subgroup of all $A \in P$  that act trivially on each of the factors $W_i/W_{i-1}$, and 
$$P = V \rtimes R, \ \ R = \left \{ (A_1,\ldots,A_n) \in \prod_{i=1}^n \GL(S_i) \ \Big | \ \prod_i \det A_i = 1\right \}.$$
Also, $\mathcal E \subset \{1, \ldots, n\}^2$  will denote a subset of
pairs $(i,j)$ with $i<j$ that is closed
under `going up' and `going to the right', and we will let  $V_{\mathcal E}< P$ be
the normal subgroup consisting of all matrices that are equal to the
identity matrix except at entries corresponding to elements of
$\mathcal E$. Let $\mathcal K_{\mathcal E} < R$ be the kernel  of
the $R $-action (by conjugation) on $V/V_{\mathcal E}$.

 The goal of this section is to prove Theorem \ref{parabolictheorem}, i.e.\ that the  ergodic IRSs of $P$ are exactly the random subgroups of the form $V_{\mathcal E}\rtimes K$, where $K$ is an ergodic IRS of $\mathcal K_{\mathcal E}$.

\medskip

 We start with the following lemma.

\begin{lem}\label {intersectionLemma}
	 Suppose that $H$ is an invariant random subgroup of $P$  that lies in $V $. Then almost surely, $H = V_{\mathcal E}$ for some $\mathcal E$.
\end{lem}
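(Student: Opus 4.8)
The plan is to pass to the Lie algebra $\mathfrak v = \log V$, which as an $R$-module is graded: $\mathfrak v = \bigoplus_{i<j}\mathfrak v_{ij}$, where $\mathfrak v_{ij} = \mathcal L(S_j,S_i) \cong S_i \otimes S_j^*$ is the $(i,j)$ block, an element $(A_1,\ldots,A_n) \in R$ acts on $\mathfrak v_{ij}$ by $X \mapsto A_i X A_j^{-1}$, and conjugation by $V$ is unipotent and preserves the filtration by the lower central series. The feature of $R$ that I would exploit is that it contains a contracting one-parameter subgroup: choosing $c_1 < \cdots < c_n$ with $\sum_i c_i \dim S_i = 0$ and setting $d_t = \mathrm{diag}(e^{t c_1} I_{S_1},\ldots,e^{t c_n} I_{S_n}) \in R$, conjugation by $d_t$ scales $\mathfrak v_{ij}$ by $e^{t(c_i - c_j)}$, a factor tending to $0$ as $t \to +\infty$. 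Since $d_t \in P$, the law $\nu$ of $H$ is invariant under this flow.

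First I would prove that $H$ is almost surely connected, so that $\mathfrak h := \log H$ is a subalgebra; I expect this to be the main obstacle. The idea is that a closed subgroup carrying a nontrivial discrete (lattice) direction cannot be recurrent under the contracting flow $\{d_t\}$: as $t \to +\infty$ the covolume of any lattice direction contracts to $0$, so arbitrarily short nonzero vectors appear and the Chabauty limit fills in a strictly larger subspace, whereas $H$ itself has a positive minimal vector transverse to its identity component. Since $\nu$ is a finite invariant measure, Poincaré recurrence forces $\nu$-a.e.\ $H$ to be recurrent, hence to have no discrete directions. To make this rigorous I would induct on the nilpotency class along the lower central series $V = C_1 \supseteq \cdots \supseteq C_m \supseteq \{e\}$: the bottom piece $H \cap C_m$ is a closed subgroup of the vector group $C_m$ on which $d_t$ acts as a genuine linear contraction, so the recurrence argument, now entirely inside a vector space and in the spirit of the proof of Corollary~\ref{cor:RR}, shows it is a subspace; the induced IRS of the parabolic quotient $V/C_m$ is connected by induction, and an extension of a connected group by a connected group is connected.

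With connectedness in hand, $\mathfrak h$ is a random subspace whose law is $R$-invariant. By choosing $c_\bullet$ appropriately one can make $d_t$ scale any prescribed block $\mathfrak v_{pq}$ by a strictly larger factor than all others (take $c_p$ the unique maximum and $c_q$ the unique minimum), so Lemma~\ref{3rdfurst}, applied with the subspaces $\mathcal L_i$ taken to be the blocks $\mathfrak v_{ij}$, shows that $\mathfrak h$ almost surely respects the grading: $\mathfrak h = \bigoplus_{i<j}(\mathfrak h \cap \mathfrak v_{ij})$. Since $SL(S_i) \times SL(S_j)$ embeds in $R$ and acts on $\mathfrak v_{ij} = \mathcal L(S_j,S_i)$ in the two-sided way of Lemma~\ref{4thfurst} while fixing the grading, the marginal law of each $\mathfrak h \cap \mathfrak v_{ij}$ is $SL(S_i)\times SL(S_j)$-invariant, so by Lemma~\ref{4thfurst} it is almost surely $\{0\}$ or all of $\mathfrak v_{ij}$. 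Hence $\mathfrak h = \mathfrak v_{\mathcal E}$ for the random set $\mathcal E = \{(i,j) : \mathfrak v_{ij} \subseteq \mathfrak h\}$.

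Finally I would upgrade \emph{subalgebra} to \emph{ideal}, i.e.\ deduce that $\mathcal E$ is closed under going up and to the right, using the remaining $V$-invariance. The subalgebras of the form $\mathfrak v_{\mathcal E}$ are finite in number, so $\supp \nu$ lies in a finite subset of $\mathrm{Sub}_V$; as this support is invariant under conjugation by the connected group $V$, and a connected group acts trivially on a finite set, every $\mathfrak v_{\mathcal E}$ in the support is $\mathrm{Ad}(V)$-invariant. An $\mathrm{Ad}(V)$-invariant $\mathfrak v_{\mathcal E}$ is an ideal of $\mathfrak v$, and the condition $[\mathfrak v,\mathfrak v_{\mathcal E}] \subseteq \mathfrak v_{\mathcal E}$ translates precisely into the going-up-and-right closure of $\mathcal E$; being a union of whole blocks it is automatically $R$-invariant, so $V_{\mathcal E} = \exp \mathfrak v_{\mathcal E}$ is normal in $P$. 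Therefore $H = V_{\mathcal E}$ almost surely, as desired.
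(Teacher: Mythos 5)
Your second and third steps coincide with the paper's: once one has an $R$-invariant random \emph{subspace} of $\mathfrak v=\oplus_{i<j}\mathcal L(S_j,S_i)$, both arguments feed it to Lemma \ref{3rdfurst} (via diagonal elements of $R$ with a unique dominant eigenvalue on a prescribed block), then to Lemma \ref{4thfurst} block by block, and both deduce normality of the resulting $V_{\mathcal E}$ from the finiteness of the set of block subgroups together with invariance under the connected group $P$. Where you genuinely diverge is in handling possible disconnectedness of $H$. The paper never proves connectedness directly: it sandwiches $H$ between its identity component $H_0$ and its Zariski closure $\overline H$, shows both are block subgroups, and rules out $H_0\subsetneq H$ by observing that $H/H_0$ would then be a lattice in the nilpotent group $\overline H/H_0$, forcing $P$ to preserve Haar measure there (Lemma \ref{secondfurst}), which an explicit diagonal element of $R$ violates. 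You instead kill disconnectedness outright with a contracting one-parameter subgroup $d_t$ of $R$ and recurrence. That is a legitimate and arguably more economical alternative: it bypasses Zariski closures and Lemma \ref{secondfurst} entirely.

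However, your implementation of the connectedness step has a gap. In the inductive step you pass to ``the induced IRS of $V/C_m$,'' but the image of a closed subgroup under the quotient by a closed normal subgroup need not be closed: already in $\R^2$, projecting $\Z(1,0)+\Z(\sqrt2,1)$ along the second coordinate axis gives the dense subgroup $\Z+\sqrt2\,\Z$. So the projected random subgroup need not be an IRS of $V/C_m$ in the relevant sense, and replacing it by its Chabauty closure destroys the implication ``image connected and $H\cap C_m$ connected $\Rightarrow$ $H$ connected'' (the same $\R^2$ example, with $H\cap C_m=\{0\}$ and closure of the image equal to $\R$, is a counterexample). Fortunately the induction is unnecessary. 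Since $c_1<\cdots<c_n$, conjugation by $d_t$ contracts \emph{every} block of $\mathfrak v$, hence contracts all of $V$ toward the identity. Let $s(H)\in(0,\infty]$ be the distance from the identity to the set $H\setminus H_0$, which is closed and avoids the identity because $H_0$ is open in the Lie group $H$. Then $s(d_tHd_t^{-1})\le e^{-ct}s(H)$ pointwise for some $c>0$, and invariance of the law under $d_t$ forces the distribution function $x\mapsto\Pr{s(H)\le x}$ to be constant on $(0,\infty)$; since it tends to $0$ as $x\to 0^+$, it vanishes identically, so $s(H)=\infty$, i.e.\ $H=H_0$, almost surely. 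With that repair your proof is correct.
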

\begin{proof}
Regard $V$ as the space of upper unitriangular block matrices, where the $ij^{th}$ entries is in $\mathcal L(S_j,S_i)$. It suffices to show that almost surely, $H $ is a `matrix entry subgroup', i.e.\ a subgroup determined by prescribing that some fixed subset of the matrix entries are all zero. As there are only finitely many such subgroups, it will follow that almost surely, $H$ is a matrix entry subgroup of $V$ that is a normal subgroup of $P$. A quick computation with elementary matrices shows that the only such subgroups are the $V_\mathcal E$ described above.

Let $H_0$ and $\overline H$  be the identity component and Zariski closure of $H$, respectively,  recalling that the \emph{Zariski closure} of a subgroup is the smallest connected Lie subgroup of $V$ containing it. (See \cite[Chapter II]{Raghunathandiscrete}.) Then $H_0$ and $\overline H$ are both $R$-invariant random subgroups of $V$.  Let $\mathfrak h_0$ and $\overline {\mathfrak h}$  be the associated Lie algebras, which are $R$-invariant random subspaces of the Lie algebra $\mathfrak v$ of $V$. One can identify $\mathfrak v$ with the set of all strictly upper triangular block matrices, where the $ij^{th}$ entry is an element of $\mathcal L(S_j,S_i)$. If we identify $\mathcal L(S_j,S_i)$ with the subspace of $\mathfrak v$ consisting of matrices that are nonzero at most in the $ij^{th}$ entry, then
$$\mathfrak v = \oplus_{i<j}\mathcal L(S_j,S_i).$$

The action $R \circlearrowright \mathfrak v$  leaves all the factors $\mathcal L(S_j,S_i)$ invariant. Given $k<l$, let $A $ be the matrix that has a $2I$ in the $kk^{th}$ entry and a $\frac 12 I$ in the $ll^{th}$ entry, and is otherwise equal to the identity matrix. Then the matrix $\frac 1{\det A } A $ lies in $ R$, and acts by conjugation on each $\mathcal L(S_j,S_i)$ as the scalar matrix $\frac {\lambda_{ij}} {\det A } I$ , where 
\begin {equation}\lambda_{ij} = \begin{cases} 4 &  (i,j)=(k,l) \\  2 & i=k,
    j\neq l \text{ or } j=l,i\neq k \\ \frac 12 & i=l  \text{ or }
    j=k, \text{ and } i \neq j\\ 1 & \text{ otherwise.} \end{cases}\label {eigs}\end{equation}
Applying Lemma \ref{3rdfurst} to the direct sum $\mathfrak v = \oplus_{i<j}\mathcal L(S_j,S_i)$, considered together with the actions of all the matrices $A$ obtained by varying $k,l$, we see that almost surely, both $\mathfrak h_0$  and $\overline {\mathfrak h}$ are direct sums of subspaces of the factors $\mathcal L(S_j,S_i)$. However, the only $R$-invariant random subspaces of a \emph{fixed} factor $\mathcal L(S_j,S_i)$ are the zero subspace and the entire $\mathcal L(S_j,S_i)$. (This follows immediately from Lemma \ref{4thfurst}, since one can embed  $SL(S_i) \times SL(S_j) \hookrightarrow R$ by taking $(A,B)$ to the element of $R$ that has $A \in \mathcal L(S_i,S_i)$ in the $ii$ entry and $B \in \mathcal L(S_j,S_j)$ in the $jj$ entry, and is otherwise equal to the identity matrix.) Hence, $\mathfrak h_0$  and $\overline {\mathfrak h}$ are almost always direct sums of the factors $\mathcal L(S_j,S_i)$ themselves, rather than subspaces thereof. 
In other words, $H_0$ and $\overline H$ are matrix entry subgroups almost surely.

Now $H_0 \subset H \subset \overline H$, so if $H_0=\overline H$, then $H $ is a matrix entry subgroup as desired. So, after restricting the law of $H$, we may assume that almost surely $H_0$ and $\overline H$ are \emph{fixed} matrix entry subgroups and that $H_0 \subsetneq H$. As $H$ is an IRS of $P$, $H_0$ is a normal subgroup of $P$.   We can then project $H$ to a $P$-invariant random subgroup $H/H_0$ of the quotient group $\overline H/H_0$. Since $V$ is a  nilpotent Lie group, the sub-quotient group $\overline H/H_0$ is as well. Every Zariski dense subgroup of a nilpotent Lie group is a lattice (c.f.\ \cite[Theorem 2.3]{Raghunathandiscrete}), so the $P$-invariant random subgroup $H/H_0 < \overline H/H_0$ is a lattice almost surely. Lemma \ref{secondfurst} then implies that the $P$ action on $\overline H/H_0$ preserves Haar measure. 

But if $\mathcal D$ is the set of matrix entries that in $\overline H $ are free to take on any value, and in $H_0$ are prescribed to be zero, there is a diffeomorphism
$$ \overline H / H_0 \longrightarrow \oplus_{(i,j)\in \mathcal D} \mathcal L(S_j,S_i)$$ that takes a matrix in $\overline H$ to the list of its $\mathcal D$-entries. If Lebesgue measures are chosen on the Euclidean spaces $\mathcal L(S_j,S_i)$, the resulting product measure pulls back to a Haar measure on $\overline H/H_0 $. So, one can witness that the action $R \circlearrowright \overline H/H_0$ does not preserve Haar measure as follows. Let $i_{\min}$ be the minimum $i$ such that there is some $(i,j) \in \mathcal D$, and $i_{\max}$ be the maximum $i$  such that there is some $(j,i) \in \mathcal D$, and define $A\in R$ by letting
$$A_{ii}=\begin {cases}
	 2^{1/\dim(S_{i_{min}})} I & i = i_{\min} \\ 
2^{-1/\dim(S_{i_{max}})} I & i = i_{\max} \\
	I & \text{otherwise}.
\end {cases}$$
This $A$ acts diagonally on $\oplus_{(i,j)\in \mathcal D} \mathcal L(S_i,S_j)$, and the action is scalar in each factor. Moreover, there are no entries of $\mathcal D$  directly above the $i_{\min}$ diagonal entry, and no entries to the right of the $i_{\max}$ diagonal entry, so  the eigenvalues of the action of $A$ on $\oplus_{(i,j)\in \mathcal D} \mathcal L(S_i,S_j)$ are $1$, $ 2^{1/\dim(S_{i_{min}})}$ an $ 2^{1/\dim(S_{i_{max}})}$. Hence, $A$ cannot preserve Lebesgue measure.  \end{proof}

 Now suppose that $H $ is an ergodic IRS of  $P=V \rtimes R$.  Lemma \ref{intersectionLemma} implies that there is some $\mathcal E$ such that $H \cap V=V_{\mathcal E}$ almost surely.  Applying Theorem \ref{thm:general-irs1} to the transverse IRS that is the projection of $H$ to $(V/V_{\mathcal E})^{ab} \rtimes R$, where $( \, \cdot \, )^{ab}$ is abelianization, we see that $\pr H \subset R$ almost surely acts trivially on $(V/V_{\mathcal E})^{ab}$. But if $\mathcal A$ is the set of super diagonal entries in our block matrices that do not lie in $\mathcal E$, there is an isomorphism
$$(V/V_{\mathcal E})^{ab} \longrightarrow \oplus_{(i,j) \in \mathcal A} \mathcal L(S_i,S_j) $$ that comes from taking a matrix in $V$ to its list of $\mathcal A$-entries. It follows that a matrix in $R$ acts trivially on $(V/V_{\mathcal E})^{ab}$ if and only if it acts trivially on $V/V_{\mathcal E}$: triviality of the $(V/V_{\mathcal E})^{ab}$-action is enough to force the conditions on diagonal entries indicated in the matrix \eqref{bigmatrix} from the introduction. Hence, $\pr H$ almost surely lies in the kernel $\mathcal K_{\mathcal E}$ of the $V/V_{\mathcal E}$-action as desired.

We now know that $H \cap V=V_{\mathcal E}$  and $\pr H \subset \mathcal K_{\mathcal E}$ almost surely. We would like to conclude that $H$ has the form $V_{\mathcal E} \rtimes K$ for some IRS $K < K_{\mathcal E}$. Note that this is not immediately obvious---the diagonal in $\R^2$ is a normal subgroup that intersects the first factor trivially, but does not split as a product of subgroups of the two factors.
By  Theorem~\ref{thm:general-irs2}, we know that the centralizer $Z(\pr H) \subset R$ acts precompactly on $\mathcal X \subset V/V_{\mathcal E}$, where $\mathcal X$ is the Zariski closure in $V/V_{\mathcal E}$ of the projections of all first coordinates of elements $(v,M) \in H$. If $\mathcal X = \{V_{\mathcal E}\}$, we are done, since then the first coordinates of all $(v,M)\in H$ lie in $V_{\mathcal E} = H \cap V$ and $H$ must have the form $V_{\mathcal E} \rtimes K$ for some IRS $K < \mathcal K_{\mathcal E}$. 

 So, we may assume that $\mathcal X \mathcal V_{\mathcal E} \supsetneq V_{\mathcal E}$. Picking a matrix $B$ in the difference, there is some entry $(i,j) \not \in \mathcal E$ in which $B$ is nonzero. The  centralizer $Z(\pr H)$ contains all elements of $R$ all of whose diagonal entries are scalars,  so in particular it contains the matrix whose eigenvalues $\lambda$ are listed in \eqref{eigs} above.  The action of this matrix on $B$ scales the $(i,j)$ entry by $4$, so $Z(\pr H)$ does not act pre-compactly on $\mathcal X $, and we have a contradiction.

\section{IRSs of special affine groups}
\label {affinesec}

 Using Theorems~\ref{thm:general-irs1} and \ref{thm:general-irs2}, it is now fairly easy to prove the results on IRSs of special affine groups stated in the introduction.

\begin{proof}[Proof of Theorem~\ref{thm:slr-irs}]
  Let $H$ be a  nontrivial ergodic IRS of $\R^d \rtimes \slr$.   Suppose that $H \cap \R^d=\{0\}$  almost surely. As the action $\slr \circlearrowright \R^d$ is faithful, Theorem~\ref{thm:general-irs1}  implies that $H$ is trivial.  So, $H \cap \R^d$ is almost surely some nontrivial  subgroup of $\R^d$.

In order to prove $H \cap \R^d$ is either a lattice or $\R^d$, it suffices to prove that the Zariski closure of $H \cap \R^d$ is almost surely $\R^d$. If not, we get for some $1 \leq k \leq d-1$, a $\slr$-invariant probability measure on the Grassmannian of $k$-dimensional subspaces of $\R^d$. In the terminology of Furstenberg \cite{Furstenbergnote}, $\slr$ is a m.a.p.\ group, so  this measure must be concentrated on $\slr$-invariant points. (Apply \cite[Lemma 3]{Furstenbergnote}  to the $k^{th}$ exterior power of $\R^d$.)  However, no nontrivial subspaces of $\R^d$ are $\slr$-invariant.

Now suppose $H \cap \R^d$ is a lattice (almost surely).  Let $\mu$ denote the law of $H$. By decomposing $\mu$ over the map $H \mapsto H \cap \R^d$, we can write $\mu = \int \mu_\Lambda~d\nu(\Lambda)$ where $\nu$ is the pushforward of $\mu$ under $H \mapsto H\cap \R^d$ and $\mu_\Lambda$ is concentrated on the set of subgroups $H$ such that $H\cap \R^d  = \Lambda$. By ergodicity $\nu$ is supported on the set of lattices of some fixed covolume $c>0$. Moreover $\nu$ is $\slr$ invariant since the map $H \mapsto H \cap \R^d$ is equivariant. Since $\slr$ acts transitively on this set of lattices, it follows that $\nu$ must be the Haar measure. 

By equivariance, we must have $\mu_{g\Lambda} = g_*\mu_\Lambda$ for $g \in \slr$ and $\nu$-a.e. $\Lambda$. Because $\slr$ acts transitively on the set of lattices with fixed covolume, we can assume without loss of generality that $\mu_{g\Lambda} = g_*\mu_\Lambda$ holds for every $g\in \slr$ and lattice $\Lambda$. 

We claim that $\mu_{\Lambda}$-a.e.\ $H$ is contained in
$\Lambda \rtimes \SL(\Lambda)$. First let $(v,M) \in H$. For any
$w\in \Lambda$ we have that $(w,I) \in H$, and so
$$(v,M)(w,I)(v,M)^{-1} = (M w, I) \in H \cap \R^d = \Lambda.$$ Because
$w\in \Lambda$ is arbitrary, $M \in \SL(\Lambda)$. Next observe that
the law of $H$ is invariant under conjugation by
$\Lambda \rtimes \SL(\Lambda)$. So if there exists
$M \in \SL(\Lambda)$ such that $S_H(M) \ne \Lambda$ with positive
probability then $M H M^{-1} \cap \R^d \ne \Lambda$ with positive
probability. This contradiction shows that $S_H(M)=\Lambda$ almost
surely which implies $H \le \Lambda \rtimes \SL(\Lambda)$. Thus
$\mu_\Lambda$ is the law of an IRS of $\Lambda \rtimes
\SL(\Lambda)$. This IRS must be ergodic because $\mu$ is ergodic.

%
\end{proof}

\begin {proof}[Proof of Theorem \ref{thm:affine-irss}] Let $H$ be a non-trivial, ergodic IRS of $G=\Z^d \rtimes \slz$. Then $H \cap \Z^d$ is a
random subgroup of $\Z^d$ whose law is invariant to the $\slz$
action. Note that since the action $\slz \circlearrowright \Z^d$  is faithful, Theorem \ref{thm:general-irs1}  implies that $H \cap \Z^d\neq \{0\}$.
Since there are only countably many subgroups of $\Z^d$, the
distribution of $H \cap \Z^d$ must be concentrated on a single, finite
$\slz$-orbit. So, $H\cap \Z^d$ is almost surely finite index in $\Z^d$. 

Let $O = \{M(H\cap \Z^d)\,:\,M \in \slz\}$ be the orbit of $H\cap \Z^d$ under
the $\slz$ action. Now, the intersection of the groups in this orbit
is also finite index in $\Z^d$, and is furthermore $\slz$-invariant,
and so must equal $n\Z^d$ for some $n \in \N$.

Recall that $G_n = (n \Z^d) \rtimes \Gamma(n)$, and let
$H_n = H \cap G_n$, a finite index subgroup of $H$. Using the  cocycle notation of \S \ref{cocycle}, for any
$M \in \pr H_n$ it holds that $S_H(M) = S_H(I):=H\cap \Z^d$, since otherwise
$S_H(M)$ is a non-trivial coset of $S_H(I)$, and its intersection with
$n\Z^d$, a subgroup of $S_H(I)$, is trivial, thus excluding $M$ from
$\pr H_n$. It follows that $H_n = (n\Z^d) \rtimes (\pr H_n)$.  This
completes the proof of Theorem~\ref{thm:affine-irss}.\end{proof}





\bibliographystyle{abbrv}
\bibliography{affine}

\end{document}